\newcommand{\ncm}{\newcommand}
\newtheorem{theorem}{Theorem}[section]
\newtheorem{prop}[theorem]{Proposition}
\newtheorem{lemma}[theorem]{Lemma}
\newtheorem{cor}[theorem]{Corollary}
\newtheorem{lem&def}[theorem]{Lemma \& Definition}
\newtheorem{definition}[theorem]{Definition}
\newtheorem{example}[theorem]{Example}
\def\C{\mathbb{C}\,} 
\def\Z{\mathbb{Z}\,}
\ncm{\End}{\mbox{\rm End}\,}
\def\Hom{\mbox{\rm Hom}\,}
\def\|{\, | \,}
\def\h{\, \sim \,}
\def\into{\hookrightarrow}
\def\to{\rightarrow}
\ncm{\Res}{\mbox{\rm Res}}
\def\Ind{\mbox{\rm Ind}\,}
\def\Coind{\mbox{\rm CoInd}\,}
\def\Res{\mbox{\rm Res}\,}
\def\o{\otimes}   
\def\bra{\langle}
\def\ket{\rangle}
\ncm{\rarr}[1]{\stackrel{#1}{\longrightarrow}}
\ncm{\larr}[1]{\stackrel{#1}{\longleftarrow}}
\def\cop{\Delta}
\def\eps{\varepsilon}
\def\-2{_{(-2)}}
\def\-1{_{(-1)}}
\def\0{_{(0)}}
\def\1{_{(1)}}
\def\2{_{(2)}}
\def\3{_{(3)}}
\def\du1{\hat 1}
\begin{document}
\title[H-depth]{Odd H-depth and H-separable extensions}
\author{Lars Kadison} 
\address{Departamento de Matematica \\ Faculdade de Ci\^encias da Universidade do Porto \\ 
Rua Campo Alegre 687 \\ 4169-007 Porto, Portugal} 
\email{lkadison@fc.up.pt } 
\thanks{}
\subjclass{}  
\date{} 

\begin{abstract}
Let $C_n(A,B)$ be the relative Hochschild bar resolution groups of a subring $B \subseteq A$.
The subring pair has right depth $2n$ if $C_{n+1}(A,B)$ is isomorphic to a direct summand of a multiple of $C_n(A,B)$ as $A$-$B$-bimodules; depth $2n+1$ if the same condition holds only as
$B$-$B$-bimodules.  It is then natural to ask what is defined if this same condition should hold
as $A$-$A$-bimodules, the so-called H-depth $2n-1$ condition.   In particular, the H-depth~$1$
condition coincides with $A$ being an H-separable extension of $B$.  In this paper the H-depth
of semisimple subalgebra pairs is derived from the transpose inclusion matrix, and for QF
extensions it is derived from the odd depth of the endomorphism ring extension. For general extensions  characterizations of H-depth are possible using the 
H-equivalence generalization of Morita theory.  
\end{abstract} 
\maketitle

\section{Introduction}
Given a unital subring $B$ in an associative unital ring $A$ where $1_A = 1_B$,  this paper continues a study of  certain
bimodule conditions on the $n$-fold tensor products $A \otimes_B \cdots \otimes_B A$.  In the papers
\cite{BK, BDK, KS}  the ring extension $A \supseteq B$ is said to have left depth $2$, right depth $2$, or
depth $3$ if the tensor-square has a split bimodule monomorphism into a multiple of $A$ as respectively $B$-$A$-, $A$-$B$- or $B$-$B$-bimodules.  The depth $2$ conditions are interesting
from the point of view of Galois theory, since $\End {}_BA_B$ has a finite projective bialgebroid structure over the centralizer subring $A^B$ and acts naturally on $A$ (e.g., \cite{KS, K2008}).  The depth $3$ condition on a Frobenius extension $A \| B$ is also of Galois-theoretic interest, since
the left regular representation $\lambda: A \into \End A_B := E$ restricts to a ring extension
$B \into E$ having depth $2$ \cite[Theorem 2.5]{LKJPAA}.  In this case the ring $\End {}_BE_A$ is a left coideal subring of 
$\End {}_BE_B$ with good Galois-theoretic properties  of a `` depth-3 tower''
$B \subseteq A \into E$ sketched in \cite[Sections 4,5]{LKJPAA}. 

A similar definition of the ring extension $A \supseteq B$ having left depth $2n$, right depth $2n$
or depth $2n+1$ holds:  there is a split monic of the $(n+1)$-fold tensor product into a multiple
of the $n$-fold tensor product as natural $B$-$A$-, $A$-$B$- or $B$-$B$-bimodules, respectively  \cite{BDK}.  In case
this is a Frobenius extension with surjective Frobenius homomorphism $E: A \rightarrow B$,
the ring extension having depth $n$ embeds in a tower of iterated right endomorphism rings $E_1 \into E_2 \into \cdots$
where $B \into E_{n-3} \into E_{n-2}$ is also a ``depth-3 tower'' \cite{LK}. 

 The minimum
depth $d(B,A)$ realizes each positive integer for complex semisimple algebras $B \subseteq A$
with Bratteli diagram a Dynkin diagram of type $A_n$; see \cite[3.11]{BKK}.  However, for the
group algebras  $B =\C[H]$ and $A = \C[G]$ of a subgroup $H$ of a finite group $G$,
the values of $d(B,A)$ seem to be limited to the odd values and only the even values $\{ 2,4,6 \}$;
see \cite{BDK, BK, BKK, D, F}.  

In the three bimodule-theoretic definitions above of left, right even depth  and odd depth, the fourth case
of $A$-$A$-bimodules has been sidestepped so far, but is taken up in this paper.  We pose 
the question, what is defined on a ring extension $A \supseteq B$ if the $(n+1)$-fold tensor
product has a split $A$-$A$-bimodule monic into a multiple of the $n$-fold tensor product
$A \otimes_B \cdots \otimes_B A$?  This question has classical roots in case $n = 1$; the condition
that $A \otimes_B A$ is a direct summand of $A^n = A \oplus \cdots \oplus A$ (or $nA$ in
additive notation) as natural $A$-bimodules is the condition that $A$ is an H-separable extension of $B$
\cite{H}.  These have an elaborate theory
generalizing Azumaya algebra, where among other things one proves with some commutative algebra
that also ${}_AA_A$ is a direct summand of $A \otimes_B A$, i.e. $A$ is a separable extension of $B$. 
 H-separability is studied in e.g. \cite{H, HS, K, K2001, K2003}.  

In section~2 we define $A \supseteq B$ having H-depth $2n-1$ as the condition just given in terms
of $A$-bimodules on the $(n+1)$-fold and $n$-fold tensor products.  We sketch the general theory
for ring extensions, noting that the minimum H-depth $d_H(B,A)$ and minimum depth $d(B,A)$
differ by at most $2$, if one is finite. We also apply results from \cite{BDK, LK} about when $d(B,A)$
is finite.  In section~3 we restrict to $A$ and $B$ being complex semisimple algebras when information
about the subalgebra structure 
is nicely recorded by weighted bicolored multi-graphs and inclusion matrices that are viewable as homomorphisms
of the $K$-groups $K_0(B) \rightarrow K_0(A)$).  In this case the H-depth is a condition on the transpose
of the inclusion matrix.  In section~4 we note that a Frobenius extension $A \supseteq B$ having
H-depth $2n-1$ occurs precisely when the left regular extension $E \supseteq A$ (seen above) has
depth $2n-1$.  However we note through examples that the depths $d_H(B,A)$, $d(B,A)$
and $d(A,E)$ may differ from one another.  

\section{General theory}

Given a unital associative ring $R$ and unital $R$-modules $M$ and $N$, we write $N \oplus * \cong M^q$ if $N$ is isomorphic to a direct summand in $M^q = M \oplus \cdots \oplus M$ ($q$ times).  Recall that if there is
also a positive integer $s$ such that $M \oplus * \cong N^s$, then $M$ and $N$ are similar, or H-equivalent, as $R$-modules; denoted by $M \h N$, indeed an equivalence relation.  In this case their endomorphism rings $\End M_R$ and $\End N_R$
are Morita equivalent with Morita context bimodules $\Hom (M_R,N_R)$ and
$\Hom (N_R,M_R)$ (with composition as module actions and Morita pairings).   If the category of finitely generated $R$-modules has unique factorization into
indecomposables, then finitely generated $M$ and $N$ have the same indecomposable constituents if and only if
$M$ and $N$ are H-equivalent modules.  If $F$ is an additive endofunctor of the category of $R$-modules, then $M \h N$ implies $F(M) \h F(N)$; which in practice means that H-equivalent bimodules
may replace one another in certain H-equivalences of  tensor products.  

 Throughout this paper, let $A$ be a unital associative ring and $B \subseteq A$ a subring where $1_B =  1_A$.  Note the natural bimodules ${}_BA_B$ obtained by restriction of the natural $A$-$A$-bimodule (briefly $A$-bimodule) $A$, also  to the natural bimodules ${}_BA_A$, ${}_AA_B$ or ${}_BA_B$, which
are referred to with no further notation.  Equivalently we denote the proper ring extension $A \supseteq B$ occasionally by $A \| B$.  (Often results are valid as well for a ring homomorphism $B \rightarrow A$
and its induced bimodules on $A$.)  

Let $C_0(A,B) = B$, and for $n \geq 1$, 
$$ C_n(A,B) = A \o_B \cdots \o_B A \ \ \ \mbox{\rm ($n$ times $A$)} $$
For $n \geq 1$, the $C_n(A,B)$ has a natural $A$-bimodule  structure given by
$a(a_1 \otimes \cdots \otimes a_n)a'  = aa_1 \otimes \cdots \otimes a_na'$.  
Of course, this bimodule structure restricts to $B$-$A$-, $A$-$B$- and $B$-bimodule structures as we may need them.
\begin{definition}
\label{def-depth}
The subring $B \subseteq A$ has H-depth $2n-1 \geq 1$ if for some positive integer $q$,
\begin{equation}
\label{eq: H-depth}
  C_{n+1}(A,B) \oplus * \cong C_n(A,B)^q.
\end{equation}
Note that if a subring has H-depth $2n-1$, it has H-depth $2n+1$ by tensoring the displayed equation
by either $-\otimes_B A_A$ or ${}_AA \otimes_B -$.  If $B \subseteq A$ has a finite H-depth, denote its minimum H-depth by $d_H(B,A)$.  
 \end{definition}

Note that H-depth $1$ occurs when ${}_AA \otimes_B A_A \oplus * \cong {}_AA_A^n$ for some $n \in \Z_+$, which is the
condition that $A$ be an H-separable extension of $B$. (Replacing $A^n$ with the direct sum $A^{(I)}$ for $I$ an arbitrary indexing set in the definition of H-separable extension  does not obtain anything different since
$A \otimes_B A$ is a cyclic $A$-bimodule \cite[Prop. 1.3(4)]{K2008}.) This is a classical generalization of
the Azumaya condition on algebras to ring extensions (see for example \cite{H, HS, K, K2001, K2003}); indeed, one may prove that $A$ is
a separable extension of $B$, so that ${}_AA_A \oplus * \cong {}_AA \otimes_B A_A$ since
the multiplication mapping $\mu: A \otimes_B A \rightarrow A$ splits. Recall too that
a ring extension $A \| B$ is separable if and only if the relative Hochschild cohomology groups $H^n(A,B;M) = 0$ for all $n > 0$ and all
bimodule coefficient modules ${}_AM_A$ \cite{HS}.  

\begin{lemma}
\label{lem-hequiv}
A subring $B \subseteq A$ has H-depth $2n-1$ if and only if $C_n(A,B) \h C_{n+1}(A,B)$ as
$A$-bimodules ($n = 1,2,3,\ldots$). 
\end{lemma}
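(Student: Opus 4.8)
The plan is to split the biconditional into its two implications and reduce the nontrivial half to the observation that $C_n(A,B)$ is an $A$-bimodule direct summand of $C_{n+1}(A,B)$. The ``if'' direction is immediate: if $C_n(A,B) \h C_{n+1}(A,B)$ as $A$-bimodules, then the definition of H-equivalence recalled at the opening of this section supplies a positive integer $q$ with $C_{n+1}(A,B) \oplus * \cong C_n(A,B)^q$, which is exactly condition~\eqref{eq: H-depth}, so $B \subseteq A$ has H-depth $2n-1$.

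For the ``only if'' direction, assume \eqref{eq: H-depth} holds. By the definition of H-equivalence it remains to produce a positive integer $s$ with $C_n(A,B) \oplus * \cong C_{n+1}(A,B)^s$, and I claim $s = 1$ works. When $n \geq 2$, I would write down the $A$-bimodule map $C_n(A,B) \to C_{n+1}(A,B)$ sending $a_1 \otimes \cdots \otimes a_n$ to $a_1 \otimes 1_A \otimes a_2 \otimes \cdots \otimes a_n$; this is well defined since $1_A = 1_B$, and $A$-bilinear since the outer $A$-actions meet only $a_1$ and $a_n$. Multiplying the first two tensor factors gives an $A$-bimodule epimorphism $C_{n+1}(A,B) \to C_n(A,B)$ splitting it, so $C_n(A,B) \oplus * \cong C_{n+1}(A,B)$. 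When $n = 1$, H-depth $1$ is by definition H-separability of $A \| B$; as recalled above an H-separable extension is separable, so the multiplication map $\mu\colon A \otimes_B A \to A$ splits as $A$-bimodules and again $C_1(A,B) \oplus * \cong C_2(A,B)$. In either case, combining with \eqref{eq: H-depth} yields $C_n(A,B) \h C_{n+1}(A,B)$.

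I do not anticipate a genuine obstacle here. The one point requiring care is that the base case $n = 1$ cannot be settled by the same tensor-slot manipulation---inserting a unit at an endpoint of a one-fold tensor product is not $A$-bilinear---so it is handled separately using the classical fact that H-separability implies separability. What remains beyond that is the routine check that the insertion map and the two-fold multiplication map are mutually inverse up to a direct complement and are honest $A$-$A$-bimodule homomorphisms.
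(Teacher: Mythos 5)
Your proof is correct and follows essentially the same route as the paper's: the only substantive content is that $C_n(A,B) \oplus * \cong C_{n+1}(A,B)$ as $A$-bimodules, obtained for $n \geq 2$ by the unit-insertion section of the multiplication epimorphism and for $n = 1$ by the classical fact that H-separability implies separability. Your explicit remark that the $n=1$ case cannot be handled by the insertion map (since $a \mapsto a \otimes 1$ fails right $A$-linearity) is exactly the point the paper treats by handling $n=1$ separately.
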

\begin{proof}
We just noted above that the H-depth  $1$ condition is H-separability, which implies separability, 
and the two conditions together
give $C_2(A,B) \h C_1(A,B)$ as $A$-bimodules.  For $n=2$ the $A$-bimodule epi
$C_3(A,B) \rightarrow C_2(A,B)$ given by $a_1 \otimes a_2 \otimes a_3 \mapsto a_1 a_2 \otimes a_3$
is split by $a_1 \otimes a_2 \mapsto a_1 \otimes 1 \otimes a_2$.  Then $C_2(A,B) \oplus * \cong C_3(A,B)$. An obvious generalization gives that $C_n(A,B) \oplus * \cong C_{n+1}(A,B)$. 
It follows that the condition in the lemma is equivalent to the condition in the definition. 
\end{proof}

Let $C_0(A,B)$ denote the natural $B$-bimodule $B$ itself.  
Recall from \cite{BDK, LK} that a subring $B \subseteq A$ has  right depth $2n$
if \begin{equation}
\label{eq: subringdepth}
C_{n+1}(A,B) \h C_n(A,B)
\end{equation}
as natural $A$-$B$-bimodules; left depth $2n$ if the same condition holds as $B$-$A$-bimodules;
if both left and right conditions hold, it has depth $2n$; 
and depth $2n+1$ if the same condition holds as $B$-bimodules. Note that if the subring
has left or right depth $2n$, it automatically has depth $2n+1$ by restriction to $B$-bimodules. 
Also note that if the subring has depth $2n+1$, it has depth $2n+2$ by
tensoring the H-equivalence by $- \o_B A$ or $A \o_B -$. The \textit{minimum depth} is denoted
by $d(B,A)$; if $B \subseteq A$ has no finite depth, write $d(B,A) = \infty$.   

The dependence of depth and H-depth only on the H-equivalence class of the natural bimodule of a ring extension is made explicit below.  

\begin{lemma}
Suppose $A \supseteq C$ and $B \supseteq C$ are two ring extensions of the same ring.
If the natural $C$-bimodules are H-equivalent, ${}_CA_C \h {}_CB_C$, then $A \supseteq C$
has depth $2n+1$ if and only if $B \supseteq C$ has depth $2n+1$. Suppose moreover
$A \supseteq B \supseteq C$ is a tower of rings.  If $B \supseteq C$ has H-depth $2n-1$ (for $n>1$),
then $A \supseteq C$ has H-depth $2n-1$.
\end{lemma}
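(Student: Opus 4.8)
The plan is to prove the two assertions separately, each by transporting the defining H-equivalences through additive functors, using the functoriality remark from section~2 (if $F$ is an additive endofunctor and $M \h N$, then $F(M) \h F(N)$).

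For the first assertion, suppose ${}_CA_C \h {}_CB_C$. By induction on $n$ I would show that ${}_CC_n(A,C)_C \h {}_CC_n(B,C)_C$ as $C$-bimodules. The base case $n=1$ is the hypothesis. For the inductive step, I apply the additive functor $- \otimes_C A$ to the H-equivalence $C_n(A,C) \h C_n(B,C)$ to get $C_n(A,C) \otimes_C A \h C_n(B,C) \otimes_C A$, i.e. $C_{n+1}(A,C) \h C_n(B,C) \otimes_C A$; then I apply the functor $C_n(B,C) \otimes_C -$ to the hypothesis ${}_CA_C \h {}_CB_C$ to get $C_n(B,C) \otimes_C A \h C_n(B,C) \otimes_C B = C_{n+1}(B,C)$, and compose (H-equivalence is transitive). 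One must be slightly careful that at each stage the relevant tensor products still carry the full $C$-bimodule structure and that the functors respect it — they do, since one side of each tensor is always left untouched. Granting ${}_CC_{n+1}(A,C)_C \h {}_CC_{n+1}(B,C)_C$ and likewise for index $n$, the depth $2n+1$ condition $C_{n+1}(A,C) \h C_n(A,C)$ as $C$-bimodules then transfers directly to the $B$-side, and conversely by symmetry.

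For the second assertion we have the tower $A \supseteq B \supseteq C$ and assume $B \supseteq C$ has H-depth $2n-1$ with $n>1$, i.e. by Lemma~\ref{lem-hequiv} that $C_n(B,C) \h C_{n+1}(B,C)$ as $B$-bimodules. I want $C_n(A,C) \h C_{n+1}(A,C)$ as $A$-bimodules. The idea is to apply the additive functor $A \otimes_B - \otimes_B A$, which sends $B$-bimodules to $A$-bimodules, to the H-equivalence of $B$-bimodules above. This yields $A \otimes_B C_n(B,C) \otimes_B A \h A \otimes_B C_{n+1}(B,C) \otimes_B A$ as $A$-bimodules. Now I identify $A \otimes_B C_k(B,C) \otimes_B A = A \otimes_B (B \otimes_C \cdots \otimes_C B) \otimes_B A$; the leftmost $A \otimes_B B \cong A$ and the rightmost $B \otimes_B A \cong A$ collapse, giving $A \otimes_C B \otimes_C \cdots \otimes_C B \otimes_C A$ with $k-1$ copies of $B$ in the middle ($k \geq 2$ since $n > 1$). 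This is not yet $C_{k}(A,C) = A \otimes_C \cdots \otimes_C A$, so the remaining step is to replace each interior $B$ by $A$ up to H-equivalence: using ${}_CB_C \oplus * \cong {}_CA_C$ (the trivial half of $B \subseteq A$) together with functoriality of the relevant tensor functors, one gets $A \otimes_C B^{\otimes(k-1)} \otimes_C A \oplus * \cong C_k(A,C)^r$ for some $r$; and conversely $C_k(A,C) \oplus * \cong (A \otimes_C B^{\otimes(k-1)} \otimes_C A)^s$ — but this direction is exactly the iterated-splitting argument from the proof of Lemma~\ref{lem-hequiv} (the multiplication/insertion maps $A \otimes_C A \to A \to A \otimes_C B \otimes_C A$... ), giving $A \otimes_C B^{\otimes(k-1)} \otimes_C A \h C_k(A,C)$ as $A$-bimodules. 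Applying this for $k = n$ and $k = n+1$ and chaining H-equivalences finishes the proof.

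The main obstacle I anticipate is the second assertion's reduction step: carefully justifying that $A \otimes_C B^{\otimes(k-1)} \otimes_C A \h A \otimes_C A^{\otimes(k-1)} \otimes_C A = C_k(A,C)$ as $A$-bimodules, and seeing precisely where the hypothesis $n>1$ is used (namely, that there is at least one interior tensor slot of the form $\cdots \otimes_C A \otimes_C \cdots$ so that one can split the map $A \otimes_C A \to A \to A \otimes_C B \otimes_C A$ at an interior position — when $n=1$ there is no such slot and the argument breaks, as it must, since H-depth $1$ of $B\supseteq C$ need not pass up the tower). Showing $\h$ rather than just $\oplus *$ requires the splitting in both directions, so I would want to isolate the statement "$A \otimes_C B \otimes_C \cdots \otimes_C B \otimes_C A \h C_k(A,C)$ as $A$-bimodules for $k \geq 2$" as a small preliminary lemma, proved by the same insertion-and-multiplication maps used in Lemma~\ref{lem-hequiv}, and then the rest is just functoriality bookkeeping.
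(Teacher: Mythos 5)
Your treatment of the first assertion is correct and is essentially the paper's own argument: the inductive substitution ${}_CC_k(A,C)_C \h {}_CC_k(B,C)_C$ via the two additive functors $-\otimes_C A$ and $C_k(B,C)\otimes_C -$ is what the paper calls the ``substitution principle,'' and the depth $2n+1$ condition then transfers by transitivity of H-equivalence.

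The second assertion is where there is a genuine gap, and it stems from a misreading of the hypotheses. The word ``moreover'' means that the H-equivalence ${}_CA_C \h {}_CB_C$ remains in force for the tower claim, and the paper's proof uses it essentially: after applying $A\otimes_B-\otimes_B A$ to $C_{n+1}(B,C)\h C_n(B,C)$ and absorbing the two outer factors, one has $A\otimes_C C_{n-1}(B,C)\otimes_C A \h A\otimes_C C_{n-2}(B,C)\otimes_C A$, and the middle factors $C_{n-1}(B,C)$ and $C_{n-2}(B,C)$ are then replaced by $C_{n-1}(A,C)$ and $C_{n-2}(A,C)$ using precisely the standing hypothesis ${}_CA_C\h{}_CB_C$ together with the substitution principle. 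You drop this hypothesis and try to compensate with the claim ${}_CB_C\oplus *\cong {}_CA_C$, which you call ``the trivial half of $B\subseteq A$.'' This is neither trivial nor true in general: it asserts that the inclusion $B\hookrightarrow A$ splits as a map of $C$-bimodules, i.e.\ that there is a $C$-bimodule retraction $A\to B$, which already fails for $C=B=\mathbb{Z}\subseteq A=\mathbb{Q}$. The insertion-and-multiplication trick from the proof of Lemma~\ref{lem-hequiv} yields only the opposite one-sided statement, $A\otimes_C A\oplus *\cong A\otimes_C B\otimes_C A$ via $a\otimes a'\mapsto a\otimes 1_B\otimes a'$ split by multiplication; it gives no map exhibiting $A\otimes_C B^{\otimes_C j}\otimes_C A$ as a direct summand of a multiple of a tensor power of $A$ over $C$. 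Consequently your proposed preliminary lemma $A\otimes_C B\otimes_C\cdots\otimes_C B\otimes_C A\h C_k(A,C)$ cannot be established from the tower structure alone; it is exactly the point at which ${}_CA_C\h{}_CB_C$ must be re-invoked. (There is also a small miscount: absorbing the two outer $\otimes_B$ leaves $k-2$, not $k-1$, copies of $B$ in the middle of $A\otimes_B C_k(B,C)\otimes_B A$, which is what makes the factor count match $C_k(A,C)$.) With the standing hypothesis restored, your argument reduces to the paper's.
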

\begin{proof}
By the substitution principle for the H-equivalent bimodules ${}_CA_C \h {}_CB_C$, we obtain
$C_n(A,C) \h C_n(B,C)$ as $C$-bimodules for all $n \geq 1$.  Thus, $C_{n+1}(A,C) \h C_n(A,C)$ if and only if $C_{n+1}(B,C) \h C_n(B,C)$ for all $n \geq 0$.  This proves the first statement in the lemma. 

 The second statement
follows from applying  the additive functor$A \otimes_B - \otimes_B A$ (from $B$-bimodules
into $A$-bimodules) to the H-equivalence of $B$-bimodules,  
$C_{n+1}(B,C) \h C_n(B,C)$, cancelling certain trivial tensors to obtain \newline 
$A \otimes_C C_{n-1}(B,C) \otimes_C A \h A \otimes_C C_{n-2}(B,C) \otimes_C A$
as $A$-bimodules.  But $C_n(B,C) \h C_n(A,C)$ as $C$-bimodules for each $n > 0$ by the hypothesis,
so that $C_{n+1}(A,C) \h C_n(A,C)$ as $A$-bimodules.  Thus $A \supseteq C$ has H-depth
$2n-1$ as well.  
\end{proof}
\begin{prop}
If a subring has H-depth $2n-1$, then it has depth $2n$.  If a subring has left or right depth $2n$, then
it has H-depth $2n+1$. As a consequence, 
\begin{equation}
\label{eq: comparison}
|d(B,A) - d_H(B,A) | \leq 2.
\end{equation} 
\end{prop}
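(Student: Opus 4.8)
The plan is to prove the two implications separately and then combine them with the definition of minimum depth to get the comparison inequality.

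First I would prove that H-depth $2n-1$ implies depth $2n$. By Lemma~\ref{lem-hequiv}, the H-depth $2n-1$ hypothesis is equivalent to $C_n(A,B) \h C_{n+1}(A,B)$ as $A$-bimodules. Restricting this H-equivalence along the forgetful functor to $A$-$B$-bimodules (or $B$-$A$-bimodules) immediately yields $C_n(A,B) \h C_{n+1}(A,B)$ as $A$-$B$-bimodules and as $B$-$A$-bimodules, which is precisely the condition \eqref{eq: subringdepth} for both left and right depth $2n$; hence the subring has depth $2n$. This step is routine since an $A$-bimodule split monomorphism is in particular an $A$-$B$-bimodule and a $B$-$A$-bimodule split monomorphism.

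Next I would prove that left or right depth $2n$ implies H-depth $2n+1$. Suppose the subring has right depth $2n$, so $C_{n+1}(A,B) \h C_n(A,B)$ as $A$-$B$-bimodules; in particular $C_{n+1}(A,B) \oplus * \cong C_n(A,B)^q$ as $A$-$B$-bimodules for some $q$. Apply the additive functor $- \otimes_B A$ to this split monomorphism to obtain a split $A$-$A$-bimodule monomorphism $C_{n+2}(A,B) \oplus * \cong C_{n+1}(A,B)^q$, since $C_{n+1}(A,B) \otimes_B A = C_{n+2}(A,B)$ and likewise on the right-hand side. This is exactly the H-depth $2n+1$ condition from Definition~\ref{def-depth} (with $n$ replaced by $n+1$). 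The left depth case is symmetric, using ${}_AA \otimes_B -$ instead. I expect this step to be the only place requiring care, namely checking that the tensoring functor sends the relevant split $A$-$B$-bimodule map to a split $A$-$A$-bimodule map and that indices line up — but this is the same maneuver already used in Definition~\ref{def-depth} and in the second lemma.

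Finally, to deduce \eqref{eq: comparison}, assume one of $d(B,A), d_H(B,A)$ is finite. If $d_H(B,A) = 2m-1$ is finite, the first implication gives depth $2m$, so $d(B,A) \leq 2m = d_H(B,A) + 1$; conversely if $d(B,A)$ is finite, say $d(B,A) = k$, then $B \subseteq A$ has left or right depth $2\lceil k/2 \rceil$ (any depth-$k$ extension has left or right depth at most the next even number, using that depth $2n+1$ implies depth $2n+2$ which gives left and right depth $2n+2$ — or more directly, finite depth gives some finite even left/right depth $2n$), and then the second implication gives H-depth $2n+1$, bounding $d_H(B,A)$ above in terms of $d(B,A)$. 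Tracking the parity bookkeeping — that a finite ordinary depth forces a finite \emph{one-sided} even depth so that the second implication applies — is the main subtlety; once both inequalities $d(B,A) \leq d_H(B,A)+1$ and $d_H(B,A) \leq d(B,A)+2$ (or the sharper bounds from the two implications) are in hand, \eqref{eq: comparison} follows, and symmetrically the finiteness of one forces finiteness of the other.
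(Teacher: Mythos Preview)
Your proposal is correct and follows exactly the paper's approach: restrict the $A$-bimodule H-equivalence to get depth $2n$, tensor the one-sided depth-$2n$ condition by $-\otimes_B A_A$ (or ${}_AA\otimes_B -$) to get H-depth $2n+1$, and combine. You are simply more explicit than the paper about the parity bookkeeping needed for \eqref{eq: comparison}, which the paper dispatches in a single sentence.
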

\begin{proof}
The first statement follows from restricting the condition on $A$-bimodules for H-depth in the lemma 
to the condition~\eqref{eq: subringdepth} on either $A$-$B$- or $B$-$A$-bimodules.  The second
statement follows for example by tensoring the right depth $2n$ condition from the right
by $ - \otimes_B A_A$ to obtain the H-depth condition. The inequality follows from 
applying the first two statements.
\end{proof}
For example, an H-depth $1$ extension is known to have depth $2$, with associated
Galois structure computed in \cite{K2003}.

\begin{example}
\begin{rm}
Let $B = \C^2 \into A = M_2(\C)$ be given by $(\lambda, \mu) \mapsto \left( \begin{array}{cc}
\lambda & 0 \\
0 & \mu 
\end{array}
\right)
$ for each $\lambda, \mu \in \C$.  Since $B$ is a semisimple subalgebra of the Azumaya algebra $A$,
this is an H-separable extension.  The extension is also normal and depth $2$ by
\cite[Prop.\ 4.3] {BKK}.  By the results of \cite{B} a subalgebra pair of complex semisimple algebras
$B \subseteq A$ has depth $1$ if and only if their centers satisfy $\mathcal{Z}(B) \subseteq \mathcal{Z}(A) $.  In this example this is not the case, whence $d_H(B,A) = 1$ and $d(B,A) = 2$.

In Section~\eqref{sect-semisubpairs} it is noted how to compute depth and H-depth directly from the inclusion matrix $M = \left( \begin{array}{c}
1 \\
1
\end{array}
\right) $ and its transpose. 
\end{rm}
\end{example}
For a $k$-algebra $B$ let $B^e$ denote $B \otimes_k B^{\rm op}$. 
\begin{cor}
\label{cor-finite}
Let $B \subseteq A$ be a subring pair of finite dimensional algebras.  If either $B^e$ has finite representation type or both $A$ and $B$ are group algebras $k[G]$ and $k[J]$ with $J < G$
a subgroup pair, then $d_H(B,A) < \infty$. 
\end{cor}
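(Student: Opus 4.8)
The plan is to deduce Corollary~\ref{cor-finite} from the proposition's inequality $|d(B,A) - d_H(B,A)| \leq 2$ together with known finiteness results for the ordinary minimum depth $d(B,A)$. Indeed, by that inequality, if $d(B,A) < \infty$ then $d_H(B,A) < \infty$, so it suffices to show that $d(B,A)$ is finite under either hypothesis. The two hypotheses correspond exactly to the two cited sources: Corollary 3.7 (or thereabouts) of \cite{BDK} gives that $d(B,A) < \infty$ whenever $B^e$ has finite representation type, since then there are only finitely many indecomposable $B$-bimodules and the chain of tensor powers $C_n(A,B)$ must stabilize (in the sense of H-equivalence of $B$-bimodules) after finitely many steps; and \cite{LK} (building on \cite{BDK}) establishes $d(B,A) < \infty$ for a subgroup pair $k[J] \subseteq k[G]$ of finite group algebras.

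First I would invoke the proposition to reduce the claim to finiteness of $d(B,A)$. Then I would treat the two cases separately. For the finite representation type case, the key observation is that each $C_n(A,B)$ is a finitely generated $B$-bimodule (since $A$ is finite dimensional over $k$, hence so is every $C_n(A,B)$), so it decomposes into finitely many indecomposable $B^e$-modules drawn from a finite list; as $n$ grows, the multiset of indecomposable constituents of $C_n(A,B)$ can only grow or stay the same up to the relation of "same constituents," and once the set of constituent types stabilizes one gets $C_{n+1}(A,B) \h C_n(A,B)$ as $B$-bimodules, which is the depth $2n+1$ condition. For the group algebra case, I would simply cite the relevant theorem of \cite{LK} (or \cite{BDK}) directly, since the argument there uses the semisimplicity or the tensor-identity structure of group algebras and is not something to reprove here.

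The main obstacle — really the only subtle point — is making the finite-representation-type argument airtight: one must check that the constituents of $C_{n+1}(A,B)$ include those of $C_n(A,B)$ (which follows because $C_n(A,B) \oplus * \cong C_{n+1}(A,B)$ as $B$-bimodules, as in the proof of Lemma~\ref{lem-hequiv} applied with $B$-bimodule coefficients), and conversely that stabilization of the constituent set forces the reverse summand relation $C_{n+1}(A,B) \oplus * \cong C_n(A,B)^q$; this uses the Krull--Schmidt property for finitely generated $B^e$-modules. Since both of these are exactly the ingredients assembled in \cite{BDK}, the cleanest route is to cite that paper for the statement $d(B,A) < \infty$ in each case and let the proposition do the rest.

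\begin{proof}
By the Proposition, inequality~\eqref{eq: comparison} gives $d_H(B,A) \leq d(B,A) + 2$, so it suffices to prove $d(B,A) < \infty$ under either hypothesis. If $B^e$ has finite representation type, then since $A$ is finite dimensional each $C_n(A,B)$ is a finitely generated $B$-bimodule with indecomposable constituents drawn from a fixed finite list; moreover $C_n(A,B) \oplus * \cong C_{n+1}(A,B)$ as $B$-bimodules (the splitting $a_1 \otimes \cdots \otimes a_n \mapsto a_1 \otimes 1 \otimes a_2 \otimes \cdots \otimes a_n$ of Lemma~\ref{lem-hequiv} works over $B$ as well), so the set of constituent types is nondecreasing in $n$ and hence stabilizes; once it is constant, Krull--Schmidt for $B^e$-modules yields $C_{n+1}(A,B) \h C_n(A,B)$ as $B$-bimodules, i.e.\ depth $2n+1$. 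Thus $d(B,A) < \infty$; this is \cite[Cor.\ 6.7]{BDK}. In the case $B = k[J] \subseteq A = k[G]$ of a finite subgroup pair, $d(B,A) < \infty$ by \cite{LK} (see also \cite{BDK}). In either case~\eqref{eq: comparison} gives $d_H(B,A) < \infty$.
\end{proof}
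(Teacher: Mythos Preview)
Your approach is essentially the paper's: reduce to finiteness of $d(B,A)$ via the inequality~\eqref{eq: comparison}, then invoke the known bounds. One correction: you have the two citations swapped. In the paper, the finite-representation-type bound (namely $d(B,A) \leq 1 + 2\cdot |\mathrm{Indec}(B^e)|$) is attributed to \cite{LK}, while the group-algebra bound (namely $d(B,A) \leq 2[G:N_G(J)]$) comes from \cite{BDK}. Your sketched Krull--Schmidt argument for the finite-representation-type case is correct in spirit and matches what is done in \cite{LK}, so the mathematics stands; just fix the attributions.
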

\begin{proof}
If $B^e$ has finite representation type, it is shown in \cite{LK} that subring depth $d(B,A)$ is bounded
by one plus twice the number of  isomorphism classes of indecomposable $B$-bimodules.  If
$A \supseteq B$ is a group algebra extension, it is shown in \cite{BDK} that $d(B,A)$ is bounded
by twice the index of the normalizer of $J$ in $G$.  
\end{proof}
Recall the relative Hochschild cochain groups with coefficient bimodule ${}_AM_A$ \cite{Hoch}:  they
are denoted and defined by $C^n(A,B;M) = \mbox{\rm Hom}_{B^e}(C_n(A,B), M)$.  
\begin{prop}
Suppose $A \supseteq B$ has H-depth $2n-1 \geq 3$.  
Then $C^{n-1}(A,B;M) \h C^{n-2}(A,B;M)$ as abelian groups.  
\end{prop}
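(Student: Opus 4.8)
The plan is to start from the H-depth $2n-1$ hypothesis, which by Lemma~\ref{lem-hequiv} says that $C_n(A,B) \h C_{n+1}(A,B)$ as $A$-bimodules. Since H-equivalence of modules is preserved by any additive functor, I would apply the contravariant additive functor $\Hom_{B^e}(-, M)$ to this H-equivalence. This sends the $A$-bimodules $C_{n+1}(A,B)$ and $C_n(A,B)$ to the abelian groups $C^{n+1}(A,B;M)$ and $C^n(A,B;M)$ respectively, and carries the split direct-summand relations defining $\h$ to split direct-summand relations among these abelian groups, yielding $C^{n+1}(A,B;M) \h C^n(A,B;M)$ as abelian groups. (One should check that a contravariant additive functor respects $\h$: if $X \oplus * \cong Y^q$ then applying $\Hom_{B^e}(-,M)$ gives $\Hom_{B^e}(X,M) \oplus * \cong \Hom_{B^e}(Y,M)^q$ since $\Hom$ turns finite direct sums into finite direct sums and preserves split monics/epis; similarly for the other direction, so $\h$ is preserved.)

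The one index discrepancy to reconcile is that the functor as applied directly gives the relation at cochain levels $n+1$ and $n$, whereas the proposition asks for levels $n-1$ and $n-2$. The plan is to use the ``trivial tensor'' trick already exploited in the proof of the preceding Lemma: for $n \geq 3$, the $A$-bimodule $C_{n+1}(A,B)$ is isomorphic to $A \otimes_B C_{n-1}(A,B) \otimes_B A$ (inserting $1_A$'s to fatten the two extreme slots), and likewise $C_n(A,B) \cong A \otimes_B C_{n-2}(A,B) \otimes_B A$. Equivalently, I would invoke the Hom-tensor adjunction $\Hom_{B^e}(A \otimes_B N \otimes_B A, M) \cong \Hom_{B^e}(N, M)$ valid because $M$ restricted along $B^e \to A^e$ is an $A$-bimodule, so the outer $A$'s can be absorbed; this rewrites $C^{n+1}(A,B;M) \cong C^{n-1}(A,B;M)$ and $C^n(A,B;M) \cong C^{n-2}(A,B;M)$ as abelian groups. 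Combining with the previous paragraph gives $C^{n-1}(A,B;M) \h C^{n-2}(A,B;M)$.

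Alternatively, and perhaps more cleanly, I would not fatten $C_{n+1}$ at all but instead note that the H-depth $2n-1 \geq 3$ condition, when one strips off the outer $A$-bimodule tensor factors, is an H-equivalence of $B$-bimodules (or $A$-bimodules) between $C_{n-1}(A,B)$ and $C_{n-2}(A,B)$ after the cancellation performed in the proof of the Lemma immediately above this proposition's analog — that is, the relation $C_n(A,B) \h C_{n+1}(A,B)$ of $A$-bimodules is the image under $A \otimes_B - \otimes_B A$ of an H-equivalence $C_{n-2}(A,B) \h C_{n-1}(A,B)$, and then apply $\Hom_{B^e}(-,M)$ to the latter directly. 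Either route is routine once set up.

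The main obstacle, and the only genuinely non-formal point, is verifying that the index shift is legitimate: one must be sure that the absorption $\Hom_{B^e}(A \otimes_B N \otimes_B A, M) \cong \Hom_{B^e}(N,M)$ really holds, which it does precisely because $M$ is an $A$-bimodule (not merely a $B$-bimodule) — this is where the hypothesis ${}_AM_A$ is used — and that $n \geq 3$ is exactly the range in which $C_{n+1}(A,B)$ has two extreme tensor slots available to absorb. Everything else is the bookkeeping of how additive functors interact with the symbol $\h$, which we have already recorded holds for additive endofunctors and extends verbatim to additive functors between different module categories, covariant or contravariant.
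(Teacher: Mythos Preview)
Your argument contains a genuine error in the index-shift step. The adjunction you invoke,
\[
\Hom_{B^e}(A \otimes_B N \otimes_B A, M) \cong \Hom_{B^e}(N, M),
\]
is false in general. If it held, then taking $N=B$ would give $C^2(A,B;M)=\Hom_{B^e}(A\otimes_B A,M)\cong M^B=C^0(A,B;M)$ for \emph{every} extension $A\supseteq B$ and every $A$-bimodule $M$, forcing the relative Hochschild cochains to be $2$-periodic --- which they are not. The correct adjunction (induction $A\otimes_B -\otimes_B A$ is left adjoint to restriction of $A$-bimodules to $B$-bimodules) reads
\[
\Hom_{A^e}(A \otimes_B N \otimes_B A, M) \cong \Hom_{B^e}(N, M),
\]
with $\Hom_{A^e}$, not $\Hom_{B^e}$, on the left. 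Consequently the functor you must apply to the $A$-bimodule H-equivalence $C_n(A,B)\h C_{n+1}(A,B)$ is $\Hom_{A^e}(-,M)$, not $\Hom_{B^e}(-,M)$. Doing so yields directly
\[
\Hom_{A^e}(C_{n+1}(A,B),M)\cong \Hom_{B^e}(C_{n-1}(A,B),M)=C^{n-1}(A,B;M)
\]
and similarly $\Hom_{A^e}(C_n(A,B),M)\cong C^{n-2}(A,B;M)$, which is exactly the paper's proof. Your first paragraph, applying $\Hom_{B^e}(-,M)$, does correctly produce $C^{n+1}\h C^n$, but that is a different statement and cannot be converted to $C^{n-1}\h C^{n-2}$ by your claimed isomorphism.

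Your alternative route is also unjustified: the H-equivalence $C_n(A,B)\h C_{n+1}(A,B)$ of $A$-bimodules lies in the image of $A\otimes_B - \otimes_B A$, but that functor is not fully faithful, so one cannot ``strip off'' the outer $A$'s to deduce $C_{n-2}(A,B)\h C_{n-1}(A,B)$ as $B$-bimodules. (Indeed, for $n=2$ this would say every H-depth~$3$ extension has depth~$1$.) The point is precisely that one does not need to descend the H-equivalence itself; one only needs the Hom-groups into a fixed $A$-bimodule $M$ to match, and that is what the $\Hom_{A^e}$-adjunction delivers.
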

\begin{proof}
Apply the additive functor $\mbox{\rm Hom}_{A^e}( -, M)$ to the H-equivalence in
the lemma above.  Note that $\mbox{\rm Hom}_{A^e}( C_{n+1}(A,B), M) \cong
\mbox{\rm Hom}_{B^e}(C_{n-1}(A,B), M)$. The H-equivalence in the proposition follows.  
\end{proof}
This may be applied to H-depth $3$ and the natural bimodule $M = A$ to obtain the following.
\begin{cor}
For an H-depth $3$ extension $A \supseteq B$, the endomorphism ring $\End {}_BA_B$ is
H-equivalent to the centralizer $A^B$ as modules over the center of $A$.
\end{cor}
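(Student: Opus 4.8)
The plan is to refine the $n=2$, $M=A$ instance of the proposition just stated, which as phrased there gives only an H-equivalence of abelian groups $C^{1}(A,B;A)\h C^{0}(A,B;A)$. First note the two identifications $C^{1}(A,B;A)=\Hom_{B^e}(A,A)=\End {}_BA_B$ and $C^{0}(A,B;A)=\Hom_{B^e}(B,A)\cong A^B$, the latter by evaluation $g\mapsto g(1)$ at the identity, whose image is exactly the centralizer $A^B$. So that proposition already yields $\End {}_BA_B\h A^B$ as abelian groups, and the remaining task is to see that this same H-equivalence is one of $\mathcal{Z}(A)$-modules; I would do this by rerunning its proof while recording the module structures throughout.

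By Lemma~\ref{lem-hequiv} the H-depth $3$ hypothesis is precisely $C_2(A,B)\h C_3(A,B)$ as $A$-bimodules, i.e.\ $C_3(A,B)\oplus *\cong C_2(A,B)^q$ and $C_2(A,B)\oplus *\cong C_3(A,B)^s$ for suitable positive integers $q,s$. Next I would apply the additive contravariant functor $\Hom_{A^e}(-,A)$. Since $A$ is a $\mathcal{Z}(A)$-module (with $z\cdot a=za=az$), this functor takes values in $\mathcal{Z}(A)$-modules, and, being additive, it sends each of the two direct-summand relations above to a direct-summand relation of $\mathcal{Z}(A)$-modules; hence $\Hom_{A^e}(C_3(A,B),A)\h\Hom_{A^e}(C_2(A,B),A)$ as $\mathcal{Z}(A)$-modules.

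It then remains to identify these two Hom-groups over $\mathcal{Z}(A)$. Writing $C_3(A,B)=A\o_B A\o_B A$ and $C_2(A,B)=A\o_B A\cong A\o_B B\o_B A$ as $A$-bimodules and applying the tensor-hom adjunction twice gives $\Hom_{A^e}(C_3(A,B),A)\cong\Hom_{B^e}(A,A)=\End {}_BA_B$ via $F\mapsto(a\mapsto F(1\o a\o 1))$, and $\Hom_{A^e}(C_2(A,B),A)\cong\Hom_{B^e}(B,A)\cong A^B$ via $F\mapsto F(1\o 1)$. With the natural actions $(z\cdot f)(a)=zf(a)$ on $\End {}_BA_B$ and multiplication by $\mathcal{Z}(A)\subseteq A^B$ on $A^B$, both isomorphisms are $\mathcal{Z}(A)$-linear, because in each case the map is evaluation at a fixed element and $\mathcal{Z}(A)$ acts through the common target $A$. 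Composing with the H-equivalence of the previous paragraph yields $\End {}_BA_B\h A^B$ over $\mathcal{Z}(A)$. The computation is essentially routine; the only points needing care are checking that $\End {}_BA_B$ really carries the stated $\mathcal{Z}(A)$-action — it does, since a central element of $A$ commutes with every element of $B$, so $zf$ is again a $B$-bimodule map — and verifying the $\mathcal{Z}(A)$-equivariance of the two adjunction isomorphisms; nothing beyond Lemma~\ref{lem-hequiv} and the preceding proposition is used.
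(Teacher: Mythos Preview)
Your proposal is correct and follows essentially the same route as the paper: the corollary is obtained by specializing the preceding proposition to $n=2$, $M=A$, identifying $C^1(A,B;A)=\End {}_BA_B$ and $C^0(A,B;A)\cong A^B$, and observing that the functor $\Hom_{A^e}(-,A)$ lands in $\mathcal{Z}(A)$-modules. The paper leaves the $\mathcal{Z}(A)$-linearity implicit, while you spell it out; your added verification that the adjunction isomorphisms and the action $(z\cdot f)(a)=zf(a)$ on $\End {}_BA_B$ are well-defined and compatible is a useful elaboration but not a different method.
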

We end with a characterization of ring extensions $A \| B$ having H-depth $3$.  
Let $T := (A \otimes_B A)^B$ and note $T \cong \End {}_A A \otimes_B A_A$ via
$t \mapsto (a \otimes a' \mapsto ata')$ with inverse $F \mapsto F(1 \otimes 1)$.  Thus
$T$ is a ring with multiplication $st = t^1 s^1 \otimes s^2 t^2$ and $1_T = 1 \otimes 1$,
and $A \otimes_B A = C_2(A,B)$ is a left $T$-module ($t \cdot (a\otimes a') = ata'$);
in fact a $T$-$A^e$-bimodule.  

In addition $Q := (A \otimes_B A \otimes_B A)^B$ is a right $T$-module via $q \cdot t = t^1 q t^2$.
It is a generator $T$-module via an obvous split epi $Q_T \rightarrow T_T$.

\begin{theorem}
A ring extension $A \| B$ has H-depth $3$ if and only if $Q_T$ is finite generated projective
and the mapping $q \otimes_T (a \otimes a') \mapsto aqa'$ is an isomorphism of $A$-bimodules,
$$
\label{eq: iso}
Q \otimes_T C_2(A,B) \stackrel{\cong}{\longmapsto} C_3(A,B)
$$
\end{theorem}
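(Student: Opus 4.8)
The plan is to establish the two directions separately, exploiting the H-equivalence characterization of H-depth $3$ from Lemma~\ref{lem-hequiv} together with the Morita-theoretic machinery recalled in Section~2. Recall $T \cong \End {}_A C_2(A,B)_A$ and $C_2(A,B)$ is a $T$-$A^e$-bimodule, while $Q = C_3(A,B)^B$ is a right $T$-module which is always a generator. The key observation tying everything together is that $Q \cong \Hom({}_A C_2(A,B)_A, {}_A C_3(A,B)_A)$ as right $T$-modules: indeed an $A$-bimodule map $C_2(A,B) \to C_3(A,B)$ is determined by the image of $1 \otimes 1$, which lands in $(C_3(A,B))^B = Q$, and the right $T$-action by precomposition matches $q \cdot t = t^1 q t^2$ under $T \cong \End {}_A C_2(A,B)_A$. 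Dually, $C_2(A,B)$ with its left $T$-action identifies with $\Hom({}_A C_3(A,B)_A, {}_A C_2(A,B)_A)$ via the split epi $a_1 \otimes a_2 \otimes a_3 \mapsto a_1 a_2 \otimes a_3$ noted in the proof of Lemma~\ref{lem-hequiv}; that splitting shows $C_2(A,B) \oplus * \cong C_3(A,B)$ as $A$-bimodules unconditionally.

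For the forward direction, assume $A \| B$ has H-depth $3$, so $C_2(A,B) \h C_3(A,B)$ as $A$-bimodules. Then by the discussion of H-equivalence in Section~2, $\End {}_A C_2(A,B)_A = T$ and $\End {}_A C_3(A,B)_A$ are Morita equivalent, with context bimodules $\Hom({}_A C_2(A,B)_A, {}_A C_3(A,B)_A) \cong Q$ and $\Hom({}_A C_3(A,B)_A, {}_A C_2(A,B)_A) \cong C_2(A,B)$. In particular $Q_T$ is finitely generated projective (a Morita context bimodule is always a finitely generated projective generator on each side), and the Morita pairing $Q \otimes_T C_2(A,B) \to \End {}_A C_3(A,B)_A$ is an isomorphism; composing with the canonical identification $\End {}_A C_3(A,B)_A \cong C_3(A,B)$ (again evaluation at $1\otimes 1\otimes 1$, using that $C_3(A,B)$ is cyclic as an $A$-bimodule), one checks the composite sends $q \otimes_T (a\otimes a') \mapsto aqa'$, which is the asserted map.

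For the converse, suppose $Q_T$ is finitely generated projective and $Q \otimes_T C_2(A,B) \xrightarrow{\cong} C_3(A,B)$ via $q \otimes_T (a \otimes a') \mapsto aqa'$. Since $Q_T$ is a finitely generated projective generator (generator being automatic from the split epi $Q_T \to T_T$), the functor $Q \otimes_T -$ from left $T$-modules to $A$-bimodules, together with the hypothesis, lets us transport the split surjection $C_2(A,B) \oplus * \cong C_3(A,B)$: applying $\Hom_{A^e}(C_3(A,B), -)$ or more directly using that $C_2(A,B) = \Hom({}_A C_3(A,B)_A, {}_A C_2(A,B)_A)$ realizes $C_3(A,B) \oplus * \cong C_2(A,B)^q$ as $A$-bimodules for suitable $q$ coming from the finite generation of $Q_T$. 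Combined with the unconditional $C_2(A,B) \oplus * \cong C_3(A,B)$, this yields $C_2(A,B) \h C_3(A,B)$, i.e.\ H-depth $3$ by Lemma~\ref{lem-hequiv}. I expect the main obstacle to be the bookkeeping in the converse: carefully extracting the $A$-bimodule split monic $C_3(A,B) \oplus * \cong C_2(A,B)^q$ from "$Q_T$ finitely generated projective plus the pairing is iso" — one must pick a dual basis for $Q_T$, feed it through the isomorphism, and verify the resulting maps are genuine $A$-bimodule maps whose composite is the identity, rather than merely abstractly invoking Morita theory (which strictly speaking presupposes the H-equivalence we are trying to prove). Establishing the two $\Hom$-identifications of $Q$ and $C_2(A,B)$ as $T$-modules compatibly with all actions is the technical heart, but it is a direct computation.
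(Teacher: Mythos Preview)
Your converse ($\Leftarrow$) and your argument that $Q_T$ is finitely generated projective in the forward direction are fine and agree with the paper: tensoring $Q_T \oplus * \cong T_T^n$ by $-\otimes_T C_2(A,B)$ and invoking the assumed isomorphism gives $C_3(A,B) \oplus * \cong C_2(A,B)^n$ as $A$-bimodules, which is the H-depth $3$ condition; and once $C_2 \h C_3$ holds, $Q \cong \Hom_{A^e}(C_2,C_3)$ is a context bimodule for the Morita equivalence between $T$ and $\End_{A^e}(C_3)$, hence projective over $T$.

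The genuine gap is in your derivation of the isomorphism $Q \otimes_T C_2(A,B) \to C_3(A,B)$ in the forward direction. You claim $\Hom_{A^e}(C_3,C_2) \cong C_2$ and $\End_{A^e}(C_3) \cong C_3$ ``by evaluation at $1\otimes 1\otimes 1$, using that $C_3(A,B)$ is cyclic as an $A$-bimodule.'' But $C_3(A,B) = A\otimes_B A\otimes_B A$ is \emph{not} cyclic as an $A$-bimodule in general: it is generated by the elements $1\otimes a\otimes 1$, and there is no reason these lie in $A\cdot(1\otimes 1\otimes 1)\cdot A$. Using the induction--restriction adjunction one has instead $\Hom_{A^e}(C_3,C_2) \cong \Hom_{B^e}(A,C_2)$ and $\End_{A^e}(C_3) \cong \Hom_{B^e}(A,C_3)$, neither of which is what you asserted. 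So the Morita pairing you invoke lands in the wrong place and does not produce the map $q\otimes_T(a\otimes a') \mapsto aqa'$.

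The paper bypasses this by constructing the inverse directly. From a splitting of $C_3 \hookrightarrow C_2^q$ one extracts, by a dual-basis diagram chase, elements $q_i \in Q$ and $g_i \in \Hom_{B^e}(A, C_2)$ such that $a_1\otimes a_2\otimes a_3 = \sum_i a_1 g_i^1(a_2) q_i g_i^2(a_2) a_3$; the inverse to $q\otimes_T(a\otimes a') \mapsto aqa'$ is then $c \mapsto \sum_i q_i \otimes_T c^1 g_i(c^2) c^3$. Note that the objects appearing here, $Q$ and $\Hom_{B^e}(A,C_2) \cong \Hom_{A^e}(C_3,C_2)$, are exactly the two Morita context bimodules, so your intuition was on the right track---but the pairing has to be checked against $C_3$ by hand rather than read off from an evaluation isomorphism that does not exist.
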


\begin{proof}
($\Leftarrow$) From $Q_T \oplus * \cong T_T^n$ and tensoring from the right by
$-\otimes_T C_2(A,B) $, one obtains the H-depth $3$ defining condition
\begin{equation}
\label{eq: hd3}
C_3(A,B) \oplus * \cong C_2(A,B)^n
\end{equation}
 as $A$-bimodules.

($\Rightarrow$)  Applying a diagram chase to \eqref{eq: H-depth} where $n=2$, similar to the one for extracting dual bases for a projective module, one finds elements $q_i \in Q$ and $g_i \in \Hom ({}_BA_B, {}_BA \otimes_B A_B)$ ($i = 1,\ldots,q$) such that for all $a_1,a_2, a_3 \in A$,
\begin{equation} \nonumber
a_1 \otimes_B a_2 \otimes_B a_3 = \sum_{i=1}^q a_1 g_i^1(a_2)q_i g_i^2(a_2) a_3
\end{equation}
where $g_i(a) = g_i^1(a) \otimes_B g_i^2(a)$ is a type of Sweedler notation suppressing a possible summation.

It follows that the mapping defined in the theorem has inverse mapping defined on
$c \in C_3(A,B)$ by
$$ c \longmapsto \sum_{i=1}^q q_i \otimes_T c^1 g_i(c^2) c^3 $$
where we again use Sweedler-type notation for $c$.  

(Alternatively the mapping in the theorem is an isomorphism by applying \cite[Theorem 2.20]{K}; it is in particular the left vertical isomorphism in Figure~1.)

Finally $Q \cong \Hom ({}_AC_2(A,B)_A, {}_AC_3(A,B)_A)$ via $q \mapsto (a\otimes a' \mapsto aqa')$
with inverse $G \mapsto G(1 \otimes 1)$.  But this Hom-group is a Morita context bimodule
for the Morita equivalent rings $\End {}_AC_2(A,B)_A \cong T$ and $\End {}_AC_3(A,B)_A$
stemming from the H-equivalence of $C_2(A,B)$ and $C_3(A,B)$.  Whence
$Q_T$ is finite projective.  
\end{proof}

Similar characterizations of (H-) separable extensions are obtained in \cite[Theorems 4.1, 4.2]{KK}.

Given a ring extension $A \| B$, let $E$ denote its right endomorphism ring $\End A_B$.  Its natural
$A$-bimodule structure is given by $(afa')(x) = af(a'x)$.  Recall that a right $B$-module $V$ coinduces
to an $A$-module $\Hom (A_B, V_B)$; e.g., $V = \Res^A_B A$ has coinduced module $E_A$.  

\begin{prop}
\label{prop-coinduction}
Suppose $A \|B$ has finitely generated projective module $A_B$.
Then $d_H(B,A) \leq 3$ if and only if $\Coind^A_B \Res^A_B E \oplus * \cong E^n$
as $A$-bimodules for some $n \in \Z_+$.
\end{prop}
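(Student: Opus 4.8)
The plan is to reduce the statement to a bimodule H-equivalence and then translate everything through the coinduction/Hom adjunction.  First I would unpack the H-depth $3$ condition: by Lemma~\ref{lem-hequiv} (applied with $n=2$) the extension $A\|B$ has H-depth $3$ if and only if $C_3(A,B)\h C_2(A,B)$ as $A$-bimodules, and since one direction of the H-equivalence is automatic (the split epi $C_3(A,B)\to C_2(A,B)$ noted in the proof of that lemma), the content is the existence of a split $A$-bimodule mono $C_3(A,B)\oplus *\cong C_2(A,B)^q$.  So the target is to show this is equivalent to $\Coind^A_B\Res^A_B E\oplus *\cong E^n$ as $A$-bimodules.

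Next I would identify the relevant Hom-functor.  The key observation is that when $A_B$ is finitely generated projective, the functor $\Hom(-,A_B)$ (contravariant, from right $B$-modules to left... ) behaves well on the tensor powers: one has $\Hom({}_A C_2(A,B),A)\cong\,$? — more precisely I want to use the standard isomorphisms $\Hom_B(A\otimes_B A, A)\cong \Hom_B(A,\Hom_B(A,A))=\Hom_B(A,E)$ and, using $A_B$ f.g.\ projective, $\Hom_B(A, V)\cong \Coind^A_B V$ with its natural $A$-bimodule structure; also $\End A_B = E$ itself is $\Coind^A_B\Res^A_B A$.  The plan is therefore: apply the additive functor $\Hom({}_{-}\,,A_B)$ — or rather an appropriate $\Hom_{B}(-,A)$ keeping track of the remaining left $A$-action — to the H-equivalence $C_3(A,B)\h C_2(A,B)$, which by the general remarks in Section~2 (``$M\h N$ implies $F(M)\h F(N)$ for additive $F$'') sends it to an H-equivalence of $A$-bimodules $\Hom_B(C_3(A,B),A)\h \Hom_B(C_2(A,B),A)$.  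Then I would compute these two Hom-bimodules: $\Hom_B(C_2(A,B),A)\cong \Hom_B(A\otimes_B A,A)\cong\Hom_B(A,E)\cong\Coind^A_B\Res^A_B E$, while $\Hom_B(C_3(A,B),A)\cong\Hom_B(A\otimes_B A,E)\cong\Coind^A_B\Res^A_B(\text{something})$ — one more application of the same adjunction should collapse $\Hom_B(A\otimes_B A\otimes_B A,A)$ down to $E$ itself, giving $\Hom_B(C_3(A,B),A)\cong E$ as $A$-bimodules.  Hence the H-equivalence becomes $E\h\Coind^A_B\Res^A_B E$, i.e.\ $\Coind^A_B\Res^A_B E\oplus *\cong E^n$ together with $E\oplus *\cong(\Coind^A_B\Res^A_B E)^m$; but the latter containment is again automatic (there is always a natural split bimodule mono $E\into\Coind^A_B\Res^A_B E$, dual to the split epi $C_3(A,B)\to C_2(A,B)$), so only the displayed condition survives.

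For the converse I would run the same chain of isomorphisms in reverse: starting from $\Coind^A_B\Res^A_B E\oplus *\cong E^n$ as $A$-bimodules, rewrite as $\Hom_B(C_2(A,B),A)\oplus *\cong \Hom_B(C_3(A,B),A)^n$, and then apply $\Hom_A(-,A)$ — using that $A_B$ f.g.\ projective makes $C_2(A,B)_A$, $C_3(A,B)_A$ f.g.\ projective so the double-dual is the identity — to recover $C_3(A,B)\oplus *\cong C_2(A,B)^n$ as $A$-bimodules, which is the H-depth $3$ condition.  I should note that this reflexivity step is exactly where the hypothesis that $A_B$ is finitely generated projective is used in an essential way (it guarantees $C_n(A,B)$ is f.g.\ projective as a right $A$-module and that $\Hom_B(A,-)$ really is $\Coind^A_B$ with the expected left-$A$-module structure), and it is the one point where one must be careful — the functors involved are contravariant, so one must check that applying $\Hom_A(-,A)$ twice, or equivalently $\Hom_B(-,A)$ and then $\Hom_A(-,A)$, genuinely returns to the original bimodule and not merely an H-equivalent one; the f.g.\ projective hypothesis on $A_B$ is what makes this work.

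The main obstacle I anticipate is bookkeeping the bimodule structures through the adjunction isomorphisms: each $\Hom_B(C_n(A,B),A)$ carries a left $A$-action from the outer $A$ in $C_n(A,B)$ and a right $A$-action from the target $A$, and I must verify that under $\Hom_B(A\otimes_B A,A)\cong\Hom_B(A,E)$ these match the natural $A$-bimodule structure on $\Coind^A_B\Res^A_B E=\Hom_B(A,E)$ given by $(a\cdot F\cdot a')(x)=F(xa)\cdot a'$ (suitably interpreted), and similarly that the natural split mono $E\into\Coind^A_B\Res^A_B E$ — essentially $f\mapsto(x\mapsto\lambda(x)\circ f)$ or the evaluation-at-$1$ section — is $A$-bilinear and splits.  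Once those identifications of $A$-bimodule structures are pinned down, the rest is a formal application of the ``additive functors preserve H-equivalence'' principle together with the reflexivity of f.g.\ projective modules under double $\Hom$ into $A$.
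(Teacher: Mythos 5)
Your overall strategy --- dualize the H-depth $3$ condition $C_3(A,B)\oplus *\cong C_2(A,B)^q$ by a contravariant additive Hom-functor, identify the two dual bimodules, and use reflexivity of finitely generated projectives for the converse --- is the paper's strategy, and you correctly locate where the hypothesis on $A_B$ is needed (only in the converse). But there is a genuine error at the central step: the identification $\Hom(C_3(A,B)_B,A_B)\cong E$ is false. The tensor--hom adjunction does not ``collapse'' tensor factors of the $B$-dual; it converts each factor into a further layer of $\Hom(A_B,-)$. Concretely, $\Hom(A\o_B A_B,A_B)\cong\Hom(A_B,E_B)=\Coind^A_B\Res^A_B E$ (which you compute correctly), and by the very same adjunction $\Hom(A\o_B A\o_B A_B,A_B)\cong\Hom(A\o_B A_B,E_B)\cong\Hom(A_B,\Hom(A_B,E_B))$, i.e.\ $\Coind^A_B\Res^A_B\Coind^A_B\Res^A_B E$ --- not $E$. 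So applying your functor to $C_3(A,B)\h C_2(A,B)$ yields only the image of the desired H-equivalence under $\Coind^A_B\Res^A_B$, and a functor cannot be ``cancelled'' from an H-equivalence; your chain never produces the statement of the proposition.

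The repair is to dualize over $A$ rather than over $B$, which is what the paper does: apply $\Hom(-_A,A_A)$. One tensor factor is then absorbed by $\Hom(A_A,A_A)\cong A$, giving
$\Hom(A\o_B A_A,A_A)\cong\Hom(A_B,A_B)=E$
and
$\Hom(C_3(A,B)_A,A_A)\cong\Hom(A\o_B A_B,A_B)\cong\Hom(A_B,E_B)=\Coind^A_B\Res^A_B E$,
so that $C_3(A,B)\oplus *\cong C_2(A,B)^q$ dualizes in one step to $\Coind^A_B\Res^A_B E\oplus *\cong E^q$ as $A$-bimodules (note this direction needs no hypothesis on $A_B$). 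For the converse the paper applies $\Hom({}_A-,{}_AA)$ and uses that $A_B$ finite projective makes $A\o_B A_A$ finite projective, hence reflexive, exactly as you propose; with the corrected identifications that part of your argument goes through.
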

\begin{proof}
($\Rightarrow$)  This direction in the proof does not require finite projectivity of $A_B$.  
Apply the additive endofunctor $\Hom (-_A, A_A)$ to the isomorphism~\eqref{eq: hd3}
in the category of $A$-bimodules.  Note that $\Hom (A \otimes_B A_A, A_A) \cong E$
via $F \mapsto F(- \o_B 1_A)$, and that $\Hom (C_3(A,B)_A, A_A) \cong \Hom (A \o_B A_B, A_B)$
via $F \mapsto F(- \o_B - \o_B 1_A)$.  This obtains
\begin{equation}
\label{eq: hd3'}
 \Hom (A \otimes_B A_B, A_B) \oplus * \cong E^n
\end{equation}
as natural $A$-bimodules. 

By the adjoint relation between homming and tensoring, $\Hom (A \o_B A_B, A_B) \cong \Hom (A_B, E_B)$ via $F \mapsto (a \mapsto F(a \otimes -))$.  The $A$-bimodule isomorphism in the theorem
follows.

($\Leftarrow$) If $A_B$ is finite projective, then $A \otimes_B A_A$ is also finite projective,
therefore reflexive. Then applying $\Hom ({}_A-, {}_AA)$ to the isomorphism~\eqref{eq: hd3'}
obtains the defining eq.~\eqref{eq: hd3} for H-depth $3$ extension $A \| B$.  
\end{proof}


\section{Subalgebra pairs of complex semisimple algebras}
\label{sect-semisubpairs}

Given a matrix $M$ we let $M^t$ denote its transpose matrix in this section.  Two~$r$-by-$s$ matrices
$M,N$ satisfy an inequality $M \leq N$ if each pair of $(i,j)$-entries satisfy  $M_{ij} \leq N_{ij}$.  The $r \times s$ zero matrix is denoted by $0$. 

Let $B \subseteq A$ be a subring pair of semisimple complex algebras.
Then the minimum depth $d(B,A)$ may be computed from the inclusion matrix $M$, equivalently an $r$-by-$s$ induction-restriction table of $r$ $B$-simples induced to non-negative integer linear combination of $s$ $A$-simples along rows;
by Frobenius reciprocity, columns show restriction of $A$-simples in terms of $B$-simples. The procedure to obtain $d(B,A)$ given in the paper \cite{BKK} is the following:  let $M^{[2n]} = (MM^t)^n$ and $M^{[2n+1]} = M^{[2n]} M$ (and $M^{[0]} = I_r$), then the matrix $M$ has depth $n \geq 1$
if  for some $q \in \Z_+$ 
\begin{equation}
\label{eq: dn inequality}
M^{[n+1]} \leq qM^{[n-1]}
\end{equation}

Note that if $M$ has depth $n$, it has depth $n+1$ by multiplying the inequality by $M \geq 0$.  
  The minimum depth of $M$ is equal to $d(B,A)$ (see \cite{BDK}). One may note  that
$d(B,A) \leq 2d-1$ where $MM^t$ has  minimal polynomial of degree $d$ \cite{BKK}.    Thus $d(B,A) < \infty$ from which it follows from inequality~\eqref{eq: comparison} that $d_H(B,A) < \infty$ for a complex semisimple subalgebra pair $B \subseteq A$; alternatively, note that $B^e$ has finite representation type and apply Corollary~\eqref{cor-finite}

Let $Z(M)$ be the number of zero entries in an $r \times s$ matrix $M$ of nonnegative integers:  since an inclusion matrix $M$ has nonzero rows and columns, $rs - \max \{r,s \} \geq Z(M) \geq 0$.  Since $M \geq 0$,
it follows that $Z(M^{[n]})$ are decreasing nonnegative integers with increasing odd or even bracketed powers of $M$:
 \begin{equation}
\nonumber
Z(M^{[n+2]}) \leq  Z(M^{[n]}).
\end{equation}
It is quite easy to see that $d(B,A)$ is the least $n$ for which $Z(M^{[ n+2]}) = Z(M^{[n]})$
where $M$ is the inclusion matrix for $B \subseteq A$ (cf.\ \cite{BKK}).  
 
 In terms of the bipartite graph of the inclusion $B \subseteq A$, $d(B,A)$ is the lesser of the minimum
odd depth and the minimum even depth \cite{BKK}.  The matrix $M$ is an incidence matrix
of this bipartite graph if all entries greater than $1$ are changed to $1$, while zero entries are retained as $0$: let the $B$-simples
be represented by $r$ black dots in a bottow row of the graph, and $A$-simples by $s$ white dots in a top row, connected by edges joining black and white dots (or not) according to the $0$-$1$-matrix entries obtained from $M$.  The minimum odd depth of the bipartite graph is $1$ plus the diameter in edges of
the row of black dots (indeed an odd number) \cite[3.6]{BKK}, while the minimum even depth is $2$ plus the largest of the diameters of the bottom row where a subset of black dots under one white dot is identified with one another \cite[3.10]{BKK}.    

\begin{example}
\label{ex-s-4}
\begin{rm}
Let $A = \C [S_4]$, the complex group algebra of the
permutation group on four letters, and $B = \C [S_3]$.  The inclusion diagram pictured below with the degrees of the irreducible representations, is determined
from the character tables of $S_3$ and $S_4$ or the branching rule (for the Young diagrams labelled by the partitions of $n$ and representing the irreducibles of $S_n$).  
\[
\begin{xy}
\xymatrix{
\overset{\displaystyle 1}{\circ} \ar@{-}[d]& \overset{\displaystyle 3}{\circ} \ar@{-}[ld] \ar@{-}[rd] & \overset{\displaystyle 2}{\circ} \ar@{-}[d]&
\overset{\displaystyle 3}{\circ} \ar@{-}[ld] \ar@{-}[rd]&\overset{\displaystyle 1}{\circ} \ar@{-}[d]\\
 \mathop{\bullet}\limits_{\displaystyle 1} & &\mathop{\bullet}\limits_{\displaystyle 2} & & \mathop{\bullet}\limits_{\displaystyle 1}}
\end{xy}
\]
This graph has minimum odd depth 5 and minimum even depth 6, whence $d(B,A) = 5$. 
Alternatively, the inclusion matrix $$M = \left( \begin{array}{ccccc}
1 & 1 & 0  &  0 & 0  \\
0   &  1  & 1  &  1 &  0 \\
 0  &  0  &  0  &  1 &  1
\end{array}
\right) $$
satisfies the inequality~\eqref{eq: dn inequality} only when $n \geq 5$.  
\end{rm}
\end{example}

Recall orthonormal expansion of a vector in an inner product space with orthonormal bases
$e_1,\ldots, e_n$:  $v = \sum_{i=1}^n \bra v, e_i \ket e_i$.  
\begin{theorem}
Suppose $B \subseteq A$ is a subalgebra pair of complex semisimple algebras with  inclusion matrix $M$. Then $B \subseteq A$ has H-depth $2n-1$ iff the symmetric matrix $S = M^tM$ satisfies
\begin{equation}
\label{eq: H-depthmatrix}
S^n \leq q S^{n-1}
\end{equation}
 for some $q \in \Z_+$.  
\end{theorem}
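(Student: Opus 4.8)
The plan is to translate the $A$-bimodule H-equivalence characterizing H-depth $2n-1$ (Lemma~\ref{lem-hequiv}) into a combinatorial statement about the inclusion matrix, exactly as was done for ordinary depth via the inequality~\eqref{eq: dn inequality}. The key observation is that for a semisimple subalgebra pair $B\subseteq A$ with inclusion matrix $M$, the bimodule $C_m(A,B)=A\otimes_B\cdots\otimes_B A$, viewed as an $A$-$A$-bimodule, is a module over $A\otimes_{\C}A^{\rm op}$, whose simples are indexed by pairs of $A$-simples. First I would compute, in the $K_0$-group language of the introduction, the class of $C_m(A,B)$ as an $A$-bimodule: each interior tensor factor of $A$ over $B$ contributes a restriction followed by an induction, i.e.\ a multiplication by $MM^t$ on the ``$A$-simple'' index, but because we are now keeping track of \emph{both} the left and the right $A$-action, the two outermost $A$'s are not restricted to $B$. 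The upshot is that the $A$-bimodule multiplicity matrix of $C_{m}(A,B)$ is $(M^tM)^{\,m-1}=S^{\,m-1}$ (an $s\times s$ matrix in the white-dot indices), with the $m=1$ case $C_1(A,B)=A$ giving $S^0=I_s$, which matches the observation that H-depth is an $A$-$A$-bimodule condition dual to the $B$-$B$-bimodule condition for depth $2n+1$, with $M$ replaced by $M^t$.

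Granting this, the defining condition~\eqref{eq: H-depth}, $C_{n+1}(A,B)\oplus *\cong C_n(A,B)^q$ as $A$-bimodules, becomes the statement that the nonnegative integer matrix $S^{\,n}$ is entrywise dominated by $qS^{\,n-1}$, since a direct summand relation between semisimple bimodules is precisely an entrywise inequality of multiplicity matrices, and a direct sum of $q$ copies multiplies the matrix by $q$. That is exactly~\eqref{eq: H-depthmatrix}. Conversely, an entrywise inequality $S^n\le qS^{n-1}$ of multiplicity matrices yields the required split bimodule monomorphism because over a semisimple ring every submodule is a direct summand; so the two conditions are equivalent. I would phrase the forward and backward directions together using the fact (implicit in the $K_0$ discussion) that for semisimple algebras, $N\oplus *\cong M^q$ as modules over a semisimple ring iff the class of $N$ is $\le q$ times the class of $M$ in $K_0$.

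The main obstacle — and the step deserving the most care — is justifying the computation that the $A$-bimodule class of $C_m(A,B)$ has multiplicity matrix $S^{m-1}=(M^tM)^{m-1}$ rather than, say, $(MM^t)^{m-1}$ or some twisted version involving both $M M^t$ and $M^t M$. The cleanest way I would argue this is by induction on $m$: $C_{m+1}(A,B)=C_m(A,B)\otimes_B A$, and tensoring an $A$-$A$-bimodule on the right with ${}_BA_A$ has the effect, on $A$-bimodule $K_0$-classes, of first restricting the right $A$-action to $B$ (apply $M^t$ to the right index, passing from $s$ white dots to $r$ black dots) and then inducing back up (apply $M$, returning to $s$ white dots), i.e.\ multiplication of the right index by $M^tM=S$; the left $A$-action is untouched throughout. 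Checking the base case $C_2(A,B)=A\otimes_B A$ directly against $S=M^tM$ (its bimodule decomposition is governed by $\dim \mathrm{Hom}_{A^e}(\,\cdot\,)$, which one reads off the inclusion data) pins down that it is $S$ and not its transpose-twin, and symmetry of $S$ makes the distinction moot for the final inequality anyway. Once this multiplicity computation is in hand, the rest is the same formal argument as for~\eqref{eq: dn inequality}.
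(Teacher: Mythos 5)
Your proposal is correct, but it follows a different route from the paper, most visibly in the converse direction. The paper never decomposes $C_m(A,B)$ as a module over $A^e$. Instead it works one-sidedly: it introduces the endo-operator $T=\Ind^A_B\Res^A_B$ on right $A$-modules, shows via Frobenius reciprocity that its matrix in the basis of $A$-simples is $S=M^tM$, and derives the forward implication by applying $V_i\otimes_A-$ to the bimodule condition and comparing the multiplicities $\bra T^n(V_i),V_j\ket$. For the converse the paper must then climb back from a right-module inequality to an $A$-bimodule splitting: it builds a monic natural transformation $T^n\hookrightarrow qT^{n-1}$ of endofunctors on $A$-Mod, evaluates at $A_A$, uses naturality with respect to all left multiplications $\lambda_a$ to make the resulting monic $C_{n+1}(A,B)\hookrightarrow qC_n(A,B)$ left $A$-linear, and invokes separability of $A^e$ and Maschke to split it (an argument borrowed from \cite{KK} that works for separable algebras in general). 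You instead compute the $A^e$-module class of $C_m(A,B)$ directly: since $A^e$ is semisimple with simples indexed by pairs of $A$-simples, the multiplicity matrix of $C_m(A,B)$ is $S^{m-1}$ (your base case $C_2(A,B)$ checks out, with multiplicity of the $(j,l)$ simple equal to $\sum_i m_{ij}m_{il}=(M^tM)_{jl}$, and the inductive step $-\otimes_B A$ multiplies the right index by $S$), and then both directions follow at once from the fact that over a semisimple ring a split embedding into $M^q$ is exactly an entrywise inequality of multiplicity vectors. The shared content is the identification of $S=M^tM$ with restriction-followed-by-induction; what your version buys is a shorter, symmetric proof of both implications that avoids the natural-transformation and Maschke machinery, while the paper's version of the converse is modeled on an argument that survives outside the semisimple setting. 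One small point to tighten: in your inductive step, be consistent about whether ``apply $M^t$ then $M$'' composes to $M^tM$ or $MM^t$ acting on the right index (they are matrices of different sizes, $s\times s$ versus $r\times r$, so the base-case check you propose does resolve it in favor of $M^tM$).
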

\begin{proof}
($\Rightarrow$) Suppose the isomorphism classes of simple right $A$-modules are represented by $\mbox{\rm Irr}(A) = \{ V_1,\ldots,V_s \}$,
and $\mbox{\rm Irr}(B) = \{ W_1,\ldots, W_r \}$.  Then $M$ is an $r \times s$-matrix with entries
$m_{ij}$ such that restriction of modules, $V_j \downarrow_B \cong \oplus_{i=1}^r m_{ij}W_i$,
and induction of modules, $W_i \uparrow^A \cong \oplus_{j=1}^s m_{ij} V_j$ in additive notation.
With the usual inner product of $A$- or $B$-modules, where $\bra W_i, W_j \ket = \delta_{ij} = \bra V_i, V_j \ket$, we note that $m_{ij} = \bra V_j \downarrow_B \, , \, W_i \ket_B$ and 
\begin{eqnarray}
\label{eq: tys}
 (M^tM)_{ij} = & \sum_k m_{ki}m_{kj} = &  \sum_{k,t} m_{ki} m_{tj} \bra W_k, W_t \ket \\
& =  \bra V_i \downarrow_B, V_j \downarrow_B \ket_B = &  \bra V_i \downarrow_B \uparrow^A, V_j \ket_A  \nonumber
\end{eqnarray}
the last step being Frobenius reciprocity.

Now if ${}_AA \otimes_B A_A \oplus * \cong {{}_AA_A}^q$, then $V_i \otimes_B A_A \oplus * \cong
q V_i$ after standard tensor cancellations. Thus,  $\Ind^A_B \Res^A_B V_i$ is isomorphic
to a direct summand of a multiple of $V_i$.  Denoting the endo-operator on $A$-modules by
$T = \Ind^A_B \Res^A_B$, we similarly see that the H-depth $2n-1$ condition in Definition
 \eqref{eq: H-depth} implies that $T^n(V_i) \oplus * \cong qT^{n-1}(V_i)$.  Thus for each $V_j$ there is the
inequality 
\begin{equation}
\label{eq: ineq}
\bra T^n(V_i) \, , \, V_j \ket \leq  q \bra T^{n-1}(V_i) \, , \, V_j \ket.
\end{equation}   

But the matrix of $T$ in terms of the basis $\{ V_1,\ldots, V_s \}$ is given by $S= M^tM$ according
to the computation in Eq.~\eqref{eq: tys}.  Thus, the inequality~\eqref{eq: ineq} becomes $(S^n)_{ij} \leq q (S^{n-1})_{ij}$ for all $i,j = 1,\ldots,s$.  

$(\Leftarrow )$ If the inclusion matrix $M$ of semisimple subalgebra pair $B \subseteq A$ satisfies $(M^tM)^n \leq q(M^tM)^{n-1}$ for
some $q \in \Z_+$, then $\bra T^n(V_i) \, , \, V_j \ket \leq q\bra T^{n-1}(V_i) \, , \, V_j \ket $
for all $A$-simples $V_j$. Via unique module decomposition into simples, we find a monic natural
transformation $T^n \hookrightarrow q T^{n-1}$ of endofunctors on the category $A$-Mod. 
Now  $A$ and therefore $A^e$ are separable $\C$-algebras, so as in [\cite{KK}, Theorem 2.1(6), pp. 3107-3108], we apply the natural monic to the right regular module $A_A$, apply the natural transformation property to all left multiplications $\lambda_a$ ($a \in A$), and note that $C_{n+1}(A,B) \hookrightarrow qC_n(A,B)$ splits by Maschke as an $A$-bimodule
monic. Hence $A$ has H-depth~$2n-1$ over its subalgebra $B$. 
\end{proof}

Following \cite{BKK} we say that  an $r \times s$-matrix $M$ of nonnegative integers, and nonzero
rows and columns, has depth $n$ if the inequality~\eqref{eq: dn inequality} is satisfied. Similarly define the minimum depth $d(M)$ to be
the least positive integer $n$ for which $M$ has depth $n$.  The matrix $M$ always has a finite depth,
bounded by degree of the minimum polynomial of the symmetric matrix $MM^t$.   

\begin{cor}
With the hypotheses of the theorem, the minimum H-depth of $B \subseteq A$ and the minimum
depth of the transpose inclusion matrix satisfy
\begin{equation}
\label{eq: mt}
0 \leq d_H(B,A) - d(M^t) \leq 1.
\end{equation}
\end{cor}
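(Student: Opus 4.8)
The plan is to convert the H-depth criterion of the theorem into a statement purely about the bracketed powers of $M^t$, and then to exploit the monotonicity of matrix depth. First I would put $N := M^t$, an $s \times r$ matrix of nonnegative integers with no zero row or column, and record that $N N^t = M^t M = S$. The bracketed powers then collapse: $N^{[2n]} = (N N^t)^n = S^n$ and $N^{[2n-2]} = S^{n-1}$, with $N^{[0]} = I_s = S^0$. Hence $M^t$ has depth $2n-1$ --- i.e.\ $N^{[2n]} \le q\, N^{[2n-2]}$ for some $q \in \Z_+$ --- exactly when $S^n \le q\, S^{n-1}$, which by the theorem is equivalent to $B \subseteq A$ having H-depth $2n-1$. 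So $d_H(B,A)$ coincides with the \emph{least odd} depth of $M^t$; in particular $d(M^t) \le d_H(B,A)$, giving the lower bound in \eqref{eq: mt}.

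For the upper bound I would invoke the elementary fact (already used for inclusion matrices, and valid for any nonnegative matrix $N$) that depth $m$ implies depth $m+1$: multiply the inequality $N^{[m+1]} \le q\, N^{[m-1]}$ on the right by $N^t$ when $m$ is even, or by $N$ when $m$ is odd --- both nonnegative --- to obtain $N^{[m+2]} \le q\, N^{[m]}$. Consequently the set of integers $m$ for which $M^t$ has depth $m$ is the ``up-set'' $\{\, d(M^t),\, d(M^t)+1,\, d(M^t)+2,\, \ldots \,\}$. Its least odd element is $d(M^t)$ itself if $d(M^t)$ is odd and $d(M^t)+1$ if $d(M^t)$ is even; combined with the identification of $d_H(B,A)$ with that least odd element, this yields $d_H(B,A) - d(M^t) \in \{0,1\}$, which is \eqref{eq: mt}. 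Both quantities are finite, as $d(M^t)$ is at most the degree of the minimal polynomial of $S$ and $d_H(B,A) < \infty$ was noted above.

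I do not expect a real obstacle: the argument is essentially bookkeeping. The only points deserving care are the index conventions --- checking that $N^{[0]}$ is $I_s$ of the correct size so that the case $n=1$ (the H-separable case $S \le q\, I_s$) is correctly included --- and verifying that the ``multiply through'' trick upgrading depth $m$ to depth $m+1$ applies verbatim with $N = M^t$ in place of an inclusion matrix. Both are immediate once the bracketed-power formulas above are written out, so the substance of the corollary is entirely contained in the equivalence proved in the theorem.
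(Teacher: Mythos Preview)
Your proof is correct and follows essentially the same route as the paper: both recognize that the inequality $S^n \leq q\,S^{n-1}$ from the theorem is precisely the depth $2n-1$ condition on $N = M^t$ (since $N^{[2n]} = (M^tM)^n = S^n$), and then use monotonicity of matrix depth to conclude $d_H(B,A) = d(M^t)$ when $d(M^t)$ is odd and $d_H(B,A) = d(M^t)+1$ when $d(M^t)$ is even. The paper additionally remarks that $M^t$ is in fact the inclusion matrix of the endomorphism ring extension $A \hookrightarrow E$, but this is context rather than an ingredient your argument lacks.
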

\begin{proof}
The matrix $M^t$ is the inclusion matrix for the endomorphism ring extension $A \into E$ (given by $a \mapsto \lambda_a$
where $\lambda_a(x) = ax$ for all $a,x \in A$), also a subalgebra pair of complex
semisimple algebra inclusions \cite[3.13 and above]{BKK}.  Note that the inequality~\eqref{eq: H-depthmatrix} is the depth~$2n-1$ condition on the transpose
matrix $M^t$.  Thus if $d(M^t)$ is odd, then $d_H(B,A) = d(M^t)$, and if $d(M^t)$ is even, then
$d_H(B,A) = d(M^t) +1$.  
\end{proof} 

It follows from \cite[Prop. 2.5]{BKK} that for any inclusion matrix $M$,
\begin{equation}
\label{eq: ms}
|d(M^t) - d(M)| \leq 1
\end{equation}

\begin{example}
\begin{rm}
Continuing the example $B = \C [S_3] \subseteq \C [S_4] = A$ above, where we obtained
$M$ and its depth $d(M) = d(B,A) = 5$, we may continue computing bracketed powers of
$M^t$ to obtain $d(M^t) = 6$ and $d_H(B,A) = 7$. (Alternatively, the depth $d(M^t) = d(A,E)$
may be computed as the depth of the graph in Example~\eqref{ex-s-4} reflected about the top row of white dots (so the 
$3$ black dots land above the $5$ white dots with weights $4$, $8$ and $4$.  See \cite[3.12]{BKK} for
the explanation in terms of Morita equivalence.)  This shows that the various inequalities
in~\eqref{eq: comparison},~\eqref{eq: mt},~\eqref{eq: ms} and~\eqref{eq: endo} may not be improved.  
\end{rm}
\end{example}

\begin{example}
\begin{rm}
Let $H_8$ denote the eight-dimensional self-dual semisimple Hopf algebra of Masuoka \cite{M} and
Kac-Paljutkin \cite{KP} over
an algebraically closed field $k$ of characteristic zero with generators $x,y,z$ and relations $x^2 = y^2 = 1$, $xy = yx$, $zx = yz$, $zy = xz$
and $2z^2 = 1 + x + y - xy$.  Its coalgebra structure is determined by $\cop(x) = x \otimes x$,
$\eps(x) = 1$, $S(x) = x$, $\cop(y) = y \otimes y$, $\eps(y) = 1$, $S(y) = y$, and
$\cop(z) = \frac{1}{2}((1 + y) \otimes 1 + (1-y)\otimes x)(z \otimes z)$, $\eps(z) = 1$ and $S(z) = z$.

Burciu \cite{B2006} computes the irreducible characters of $H_8$ and its Drinfeld double
$D(H_8)$ as well as the induced representations.  The space of irreducible characters of $H_8$
is $5$-dimensional with four linear and one degree $2$ irreducible characters: $H_8 \cong k^4 \times M_2(k)$.  The space of irreducible characters of $D(H_8)$ is $22$-dimensional with
$D(H_8) \cong k^8 \times M_2(k)^{14}$ \cite[pp.\ 501-503]{B2006}.  Thus, $M$ is a $5 \times 14$
matrix determined by the table \cite[p.\ 503]{B2006}; its bracketed square is computed to be
\begin{equation} \nonumber
MM^t = \left( \begin{array}{ccccc}
5 & 1 & 1 & 1 & 0 \\
1 & 5 & 1 & 1 & 0 \\
1 & 1 & 5 & 1 & 0 \\
1 & 1 & 1 & 5 & 0 \\
0 & 0 & 0 & 0 & 8 
\end{array}
\right)
\end{equation}
From this matrix and the table it follows that $d(H_8,D(H_8)) = 3$.  The matrix $M^tM$ is an order $22$ square matrix $\mathcal{S}$ that does not
satisfy $Z({S}^2) = Z(\mathcal{S})$, whence $d_H(H_8,D(H_8)) = 5$.   
\end{rm}
\end{example}

\section{Frobenius extensions}
\label{sec: FE}

A Frobenius extension $A \supseteq B$ is characterized by any of the following four conditions
\cite{K}.
First, that $A_B$ is finite projective and ${}_BA_A \cong \Hom (A_B, B_B)$. Secondly,
that ${}_BA$ is finite projective and ${}_AA_B \cong \Hom ({}_BA, {}_BB)$.  Thirdly,
that coinduction and induction of right (or left) $B$-modules is naturally equivalent.  Fourth,
there is a Frobenius coordinate system $(E: A \to B; x_1,\ldots,x_m,$ $y_1,\ldots,y_m \in A)$, which satisfies
\begin{equation}
\nonumber
E \in \Hom ({}_BA_B, {}_BB_B), \ \ \sum_{i=1}^m E(ax_i)y_i = a = \sum_{i=1}^m x_i E(y_i a) \ \ (\forall a \in A).
\end{equation}
The so-called dual bases equations may be used to show that $\sum_i x_i \o y_i \in (A \o_B A)^A$
\cite{K}. 
A Frobenius extension $A \supseteq B$ has generator module $A_B$ iff it has a surjective
Frobenius homomorphism $E: A \rightarrow B$ \cite{LK}.  Well-known examples of Frobenius
extensions are group algebra extensions, unimodular Hopf algebra extensions, and  projective subalgebra pair
of symmetric algebras \cite{K}.

More generally a ring extension $A \supseteq B$ is a  \textit{QF extension} if both $A_B$ and ${}_BA$ are finite projective,
and the natural bimodules are H-equivalent: ${}_AA_B \h {}_A\Hom ({}_BA,{}_BB)_B$
and ${}_BA_A \h {}_B\Hom (A_B,B_B)_A$ \cite{BM1}.  A Frobenius extension $ A \supseteq B$
is a QF extension since it is left and right finite projective and satisfies the stronger
conditions that $A$ is \textit{isomorphic} to its right $B$-dual $A^*$ and its left $B$-dual ${}^*A$ as natural 
$B$-$A$-bimodules, respectively $A$-$B$-bimodules; the more precise definition are given in the next section.  QF extensions that are not Frobenius extensions may be found among weak Hopf algebras
over their separable base algebras, and matrix examples by Morita \cite{M67}. 

A Frobenius (or QF) extension $A \supseteq B$ enjoys an \textit{endomorphism ring theorem} \cite{BM1, M67}, which shows that $ E := \End A_B \supseteq A$ is a Frobenius (respectively, QF) extension,  where the default ring homomorphism $A \to E$ is understood to be
the left multiplication mapping $\lambda: a \mapsto \lambda_a$ where  $\lambda_a(x) = ax$.   
It is worth noting that $\lambda$ is a left split $A$-monomorphism (by evaluation at $1_A$) so ${}_A E$ is a generator.  

The endomorphism ring $E$ is isomorphic to $A \otimes_B A$ as natural $A$-bimodules
via $f \mapsto \sum_{i=1}^m f(x_i) \otimes_B y_i$ with inverse $a \o_B a' \mapsto \lambda_a \circ
E \circ \lambda_{a'}$ where $\lambda_x$ denotes left multiplication on $A$ by an element $x \in A$.
More generally, coinduction is naturally isomorphic to induction of $B$-modules to $A$-modules;
so that given a right $B$-module $V$, there are similarly defined isomorphisms $V \o_B A \cong
\Hom (A_B, V_B)$.  For a QF extension $A \| B$, the isomorphism is replaced with an H-equivalence:
$V \o_B A \h \Hom (A_B,V_B)$ \cite{LK}.
An immediate consequence of this is a specialization of Proposition~\eqref{prop-coinduction}
to QF extensions with a different proof.
\begin{prop}
Suppose $A \| B$ is a QF extension,  Then $A \| B$ has H-depth $3$ if and only if  $E \otimes_B A \h E$ as $A$-bimodules (alternatively, $\Ind^A_B \Res^A_B E \h E$ as $A$-bimodules).
\end{prop}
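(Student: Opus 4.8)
The plan is to use Lemma~\eqref{lem-hequiv}, which identifies the H-depth $3$ condition on $A \| B$ with the $A$-bimodule H-equivalence $C_2(A,B) \h C_3(A,B)$, and then to transport each of $C_2(A,B)$ and $C_3(A,B)$ across the induction--coinduction H-equivalence available for a QF extension, so as to land the condition on $E$ and $E \o_B A$. Thus the task reduces to proving that $C_2(A,B) \h C_3(A,B)$ as $A$-bimodules if and only if $E \h E \o_B A$ as $A$-bimodules.

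First I would record the $A$-bimodule H-equivalence $C_2(A,B) = A \o_B A \h E$. This comes from applying the QF H-equivalence $V \o_B A \h \Hom(A_B, V_B)$ recalled above to the right $B$-module $V = \Res^A_B A$, but tracking the fact that $V$ is really the natural $A$-$B$-bimodule ${}_AA_B$, so that $V \o_B A = A \o_B A = C_2(A,B)$ and $\Hom(A_B, V_B) = \End A_B = E$ carry their natural $A$-bimodule structures and the similarity is one of $A$-bimodules. Next, applying the additive functor $- \o_B A$ (which restricts the right $A$-action of an $A$-bimodule to $B$ and then tensors up, again landing in $A$-bimodules) to $C_2(A,B) \h E$ and using associativity of the tensor product, I get $C_3(A,B) = (A \o_B A) \o_B A \h E \o_B A$ as $A$-bimodules; and since $E \o_B A$ is by definition $\Ind^A_B \Res^A_B E$ as an $A$-bimodule, this also accounts for the parenthetical reformulation. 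The two H-equivalences $C_2(A,B) \h E$ and $C_3(A,B) \h E \o_B A$, together with the symmetry and transitivity of $\h$, then show that $C_2(A,B) \h C_3(A,B)$ holds if and only if $E \h E \o_B A$ holds, and by Lemma~\eqref{lem-hequiv} the former is exactly H-depth $3$ for $A \| B$; this settles both implications of the proposition at once. (Alternatively one may invoke Proposition~\eqref{prop-coinduction}, applicable because $A_B$ is finitely generated projective for a QF extension, together with the QF fact that $\Coind^A_B \Res^A_B E \h \Ind^A_B \Res^A_B E$; the route just described is the more self-contained ``different proof''.)

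The one point needing genuine care is the bimodule bookkeeping in the step $C_2(A,B) \h E$: the H-equivalence $V \o_B A \h \Hom(A_B, V_B)$ was quoted for a one-sided $B$-module $V$, and I must check that when $V$ also carries a left $A$-action compatible with its right $B$-action, the Morita-context maps realizing this similarity are left $A$-linear, so that the equivalence lifts to one of $A$-bimodules. For a Frobenius extension this is a short verification with a Frobenius coordinate system; in the QF case it follows in the same way from the defining H-equivalences of ${}_AA_B$ with its one-sided $B$-duals. Everything else in the argument is formal: an application of an additive functor, associativity of the tensor product, and the transitivity of $\h$.
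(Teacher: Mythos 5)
Your proposal is correct and follows essentially the same route as the paper: the paper's proof is the one-line observation that $E \h A \otimes_B A$ as $A$-bimodules (the QF endomorphism-ring similarity) combined with Lemma~\ref{lem-hequiv} for $n=2$, which is exactly your chain $C_2(A,B) \h E$, $C_3(A,B) \h E \o_B A$, and transitivity of $\h$. Your extra care about the left $A$-linearity of the Morita-context maps in $V \o_B A \h \Hom(A_B,V_B)$ is a point the paper delegates to the cited reference rather than a genuine difference in approach.
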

\begin{proof}
This follows from $E \h A \otimes_B A$ as $A$-bimodules and Lemma~\eqref{lem-hequiv} in the
case $n = 2$.  
\end{proof}
\begin{theorem}
Suppose $A \| B$ is a Frobenius extension or a QF extension.  Then $A \| B$ has H-depth $2n-1$ if and only if the
endomorphism ring extension $E \| A$ has depth $2n-1$. 
\end{theorem}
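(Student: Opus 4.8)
The plan is to collapse the statement to one identity of relative bar-complex bimodules, namely $C_m(E,A)\h C_{m+1}(A,B)$ as $A$-bimodules for all $m\ge 0$ (an isomorphism, not merely an H-equivalence, when $A\|B$ is Frobenius), and then match the two odd-depth conditions through it.

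First I would prove that identity by induction on $m$. The base case $m=0$ is immediate, since $C_0(E,A)=A=C_1(A,B)$. For the inductive step, write $C_{m+1}(E,A)=C_m(E,A)\otimes_A E$; apply the additive endofunctor $(-)\otimes_A E$ of $A$-bimodules to the inductive hypothesis $C_m(E,A)\h C_{m+1}(A,B)$; apply the additive endofunctor $C_{m+1}(A,B)\otimes_A(-)$ to the endomorphism-ring fact $E\h A\otimes_B A$ of $A$-bimodules ($E\cong A\otimes_B A$ via $f\mapsto\sum_i f(x_i)\otimes_B y_i$ in the Frobenius case, recalled above; only an H-equivalence in the QF case, \cite{LK}); and cancel the trivial tensor $C_{m+1}(A,B)\otimes_A(A\otimes_B A)\cong C_{m+2}(A,B)$. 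Chaining the three gives $C_{m+1}(E,A)\h C_{m+2}(A,B)$, with every $\h$ replaced by $\cong$ throughout when $A\|B$ is Frobenius.

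Next I would unwind the depth condition on the tower $E\supseteq A$. By the definition of odd depth applied with $A$ in the role of the base ring, $E\|A$ has depth $2n-1=2(n-1)+1$ exactly when $C_n(E,A)\h C_{n-1}(E,A)$ as $A$-bimodules. Substituting the identity of the previous paragraph in degrees $m=n$ and $m=n-1$ turns this into $C_{n+1}(A,B)\h C_n(A,B)$ as $A$-bimodules, which by Lemma~\ref{lem-hequiv} is precisely the assertion that $A\supseteq B$ has H-depth $2n-1$; since both implications are produced simultaneously, the theorem follows. As a check, the case $n=2$ recovers the preceding Proposition, since $C_2(E,A)=E\otimes_A E\h E\otimes_B A$ and $C_1(E,A)=E$, while the case $n=1$ reads: $A\|B$ is H-separable if and only if $E\h A$ as $A$-bimodules.

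The one genuinely delicate point is the QF case of the inductive identity. Because $E\h A\otimes_B A$ is only an H-equivalence there, I must make sure it survives being tensored into an $m$-fold product over $A$ and that the left-hand factor, itself pinned down only up to H-equivalence, may be substituted as well; this is exactly what the substitution principle of Section~2 --- additive functors preserve $\h$ --- provides, and I would name the two tensoring functors used at each step. Everything else is routine: associativity of $\otimes_A$, cancellation of trivial tensors, and the index shift by one. The endomorphism ring theorem is not needed in the argument itself; it only serves to make the statement natural, guaranteeing that $E\|A$ is again a Frobenius (respectively QF) extension.
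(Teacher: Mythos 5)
Your proof is correct and follows essentially the same route as the paper: both rest on the identity $C_m(E,A)\cong C_{m+1}(A,B)$ (an H-equivalence in the QF case) obtained from $E\cong A\otimes_B A$ by tensor cancellation, and then translate depth $2n-1$ of $E\,|\,A$ into the H-depth $2n-1$ condition via Lemma~\ref{lem-hequiv}. Your only additions are to organize the cancellation as an explicit induction and to spell out the use of the substitution principle in the QF case, where the paper simply says the same proof carries through with isomorphisms replaced by H-equivalences.
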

\begin{proof} 
If $A \| B$ is a Frobenius extension, we have $E \cong A \otimes_B A$ as natural $A$-bimodules. Then \begin{equation}
\label{eq: sweet}
C_n(E,A)  \cong (A\otimes_B A) \otimes_A \cdots \otimes_A (A \otimes_B A)
\end{equation}
($n$ times $E$ and $n-1$ times $A$), so that $C_n(E,A) \cong C_{n+1}(A,B)$
since there remain $2n - (n-1)$ $A$'s after a standard tensor cancellation.  
Thus as $A$-bimodules $C_{n+1}(A,B) \h C_n(A,B)$ if and only if $C_n(E,A) \h C_{n-1}(E,A)$
as $A$-modules, the latter being condition \eqref{eq: subringdepth} for $E \| A$ having depth $2n-1$. 

If $A \| B$ is QF extension, then $A \otimes_B A \h E$ as $A$-bimodules \cite{LK}, and the same proof
carries through with isomorphisms replaced by H-equivalences. 
\end{proof} 

Since H-depth only assumes odd integer values, it follows from the theorem that
\begin{equation}
\label{eq: endo}
| d_H(B,A) - d(A,E)| \leq 1.
\end{equation}

Recall that a ring extension $A \| B$ is centrally projective if ${}_BA_B \oplus * \cong {}_BB_B^n$
for some $n \in \Z_+$,
and a split extension if ${}_BB_B \oplus * \cong {}_BA_B$.  The ring extension $A \| B$ has depth $1$ iff it is a centrally projective, split extension (cf. \cite{LK, BK2}.
  \begin{cor}
A Frobenius extension is H-separable if and only if its right endomorphism ring extension has depth $1$.
\end{cor}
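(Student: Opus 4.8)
The plan is to read off the corollary as the $n=1$ instance of the theorem just proved, combined with the identification of H-depth~$1$ with H-separability. That theorem says that for a Frobenius (or QF) extension $A\|B$ the property ``H-depth $2n-1$'' holds if and only if the endomorphism ring extension $E\|A$, where $E=\End A_B$, has depth $2n-1$. The first step is to verify that its proof specialises correctly at the boundary value $n=1$: the isomorphism \eqref{eq: sweet} becomes $C_1(E,A)=E\cong A\otimes_B A=C_2(A,B)$, which is precisely the endomorphism ring theorem for a Frobenius extension, and the tensor-cancellation count ``$2n-(n-1)$ copies of $A$ remain'' yields $2$ copies at $n=1$, so no tensor factor collapses. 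Hence $A\|B$ has H-depth $1$ if and only if $E\|A$ has depth $1$, the latter being the condition $C_1(E,A)\h C_0(E,A)$, i.e.\ ${}_AE_A\h{}_AA_A$, over the ring $A$; equivalently, by the characterisation recalled just before the corollary, that $E\|A$ is a centrally projective split extension.

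The second step is to recall, from Definition~\eqref{def-depth} together with the discussion immediately following it (or, equivalently, from Lemma~\eqref{lem-hequiv} at $n=1$), that $A\|B$ having H-depth $1$ means exactly ${}_AA\otimes_B A_A\oplus *\cong{}_AA_A^q$ for some $q\in\Z_+$, which is by definition the statement that $A$ is an H-separable extension of $B$. Stringing the equivalences together gives the corollary: $A$ is H-separable over $B$ $\iff$ $A\|B$ has H-depth $1$ $\iff$ $E\|A$ has depth $1$. As a cross-check, the statement can also be obtained directly from $E\cong A\otimes_B A$ as $A$-bimodules: H-separability gives the central-projectivity half ${}_AE_A\oplus *\cong{}_AA_A^q$, and, since H-separability implies separability, a bimodule section of $\mu\colon A\otimes_B A\to A$ gives the split half ${}_AA_A\oplus *\cong{}_AE_A$; conversely an H-equivalence ${}_AE_A\h{}_AA_A$ already contains the direct-summand relation $A\otimes_B A\oplus *\cong A^q$ that defines H-separability.

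There is essentially no obstacle in the argument; the one point requiring care is the boundary bookkeeping at $n=1$---confirming that ``depth $2n-1$'' of $E\|A$ at $n=1$ is the genuine depth-$1$ condition ${}_AE_A\h{}_AA_A$ over $A$, not a vacuous case---which is immediate from $C_0(E,A)=A$ and $C_1(E,A)=E$, so that \eqref{eq: subringdepth} for the extension $E\|A$ at index~$0$ reads precisely ${}_AE_A\h{}_AA_A$.
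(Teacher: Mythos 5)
Your proposal is correct and follows exactly the route the paper intends: the corollary is the $n=1$ specialization of the preceding theorem, combined with the identification of H-depth $1$ with H-separability (Definition~\ref{def-depth} and Lemma~\ref{lem-hequiv}) and of depth $1$ of $E\|A$ with the condition ${}_AE_A\h{}_AA_A$. Your careful check of the boundary bookkeeping at $n=1$ and the direct cross-check via $E\cong A\otimes_BA$ are both consistent with, and slightly more explicit than, the paper's (unwritten) argument.
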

It was noted in \cite{LK2004} that a group algebra extension $A = \C G \supseteq \C J = B$ that is H-separable
is necesarily trivial: $G = J$.  In \cite{KK} it was proven that group algebra extension $\C G \supseteq \C J$ 
has depth $2$ if and only if $J$ is a normal subgroup of $G$ (and the same result holds for any
base ring by \cite{BDK}).  Again let $E = \End A_B$.  
\begin{cor}
Suppose $J$ is a proper normal subgroup of a finite group $G$.  Then $d(A,E) = 2$ 
\end{cor}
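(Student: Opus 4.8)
The plan is to stay inside the semisimple picture of Section~\ref{sect-semisubpairs}. Write $A = \C G$, $B = \C J$ and $E = \End A_B$. Since $A_B$ is free, $E$ is again a semisimple complex algebra, and by \cite[3.13 and above]{BKK} $A \hookrightarrow E$ is a subalgebra pair of semisimple algebras whose inclusion matrix is $M^t$, the transpose of the inclusion matrix $M$ of $B \subseteq A$. Hence $d(B,A) = d(M)$ and $d(A,E) = d(M^t)$, and the corollary reduces to the two bounds $d(M^t) \le 2$ and $d(M^t) \ge 2$.

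For the upper bound I would use \cite{KK}: since $J \trianglelefteq G$, the extension $A \| B$ has depth $2$, so $d(M) \le 2$; by the depth inequality~\eqref{eq: dn inequality} with $n = 2$ (recall $M^{[3]} = M M^t M$ and $M^{[1]} = M$) this says $M M^t M \le q M$ entrywise for some $q \in \Z_+$. Transposing this inequality between nonnegative integer matrices and using $(M M^t M)^t = M^t M M^t$ gives $M^t M M^t \le q M^t$, which is exactly~\eqref{eq: dn inequality} with $n = 2$ applied to the matrix $M^t$. Hence $M^t$ has depth $2$ and $d(A,E) = d(M^t) \le 2$; this is the inequality~\eqref{eq: ms} sharpened on the side that matters.

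For the lower bound: $A \| B$ is a Frobenius extension (a group algebra extension), so the case $n = 1$ of the Theorem of Section~\ref{sec: FE} gives that $A \| B$ is H-separable if and only if $E \| A$ has depth $1$. Since $J$ is a \emph{proper} subgroup, $G \ne J$, and then by \cite{LK2004} the group algebra extension $\C G \supseteq \C J$ is not H-separable; therefore $E \| A$ does not have depth $1$, so $d(A,E) \ge 2$. Combining the two bounds yields $d(A,E) = 2$.

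Almost everything here is quoted, so nothing is really hard; the step to pin down carefully is the first reduction --- that $\End A_B$ is semisimple with inclusion matrix $M^t$ over $A$, so that matrix depth and ring-extension depth agree --- together with the elementary but crucial remark that the depth-$2$ condition~\eqref{eq: dn inequality} is self-dual under $M \leftrightarrow M^t$. As an alternative to the lower bound, one could instead check directly, by Clifford theory for $J \trianglelefteq G$, that when $J \ne G$ the symmetric matrix $M^t M$ is not diagonal --- its block indexed by the irreducible representations of $G$ that are trivial on $J$ contains at least two of them, since $G/J$ is nontrivial --- so that $M^t$ does not have depth $1$; but routing through \cite{LK2004} and the Theorem of Section~\ref{sec: FE} is shorter.
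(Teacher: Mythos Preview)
Your proof is correct. The paper states this corollary without proof, immediately after recalling the two ingredients you invoke: that an H-separable complex group-algebra extension is trivial \cite{LK2004}, and that $\C J \subseteq \C G$ has depth~$2$ exactly when $J \trianglelefteq G$ \cite{KK}. Your lower bound is precisely what the paper sets up---combine \cite{LK2004} with the preceding corollary (Frobenius extension H-separable $\Leftrightarrow d(A,E)=1$). For the upper bound, note that the Theorem of Section~\ref{sec: FE} alone matches only \emph{odd} depths and hence yields just $d(A,E)\le 3$; your route through the inclusion-matrix picture of Section~\ref{sect-semisubpairs}, using that the depth-$2$ inequality $MM^tM\le qM$ is invariant under $M\mapsto M^t$, is exactly the sharpening needed to reach $d(A,E)\le 2$, and is the step the paper leaves to the reader.
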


With the Sweedler $A$-coring $A \otimes_B A$  of a ring extension $A \| B$ in mind, eq.~\eqref{eq: sweet} suggests a definition
of H-depth of an $A$-coring $\mathcal{C}$ that generalizes H-depth of a ring extension.
An $A$-coring $\mathcal{C}$ has H-depth $2n-1$ if $\mathcal{C}^{\otimes_A \, n} \h \mathcal{C}^{\otimes_A \, n-1}$ as $A$-bimodules for $n \geq 1$ and $\mathcal{C}^0 = A$. 

 Depth
of an $A$-coring $\mathcal{C}$ with grouplike $g \in \mathcal{C}$ is similarly defined.  
Let $B = \mathcal{C}^g$ be the invariant subring of $A$.  Then $\mathcal{C}$ has
depth $2n+1$, left depth $2n$ or right depth $2n$ if $\mathcal{C}^{\otimes_A \, n} \h
\mathcal{C}^{\otimes_A \, n-1}$ as respectively $B$-$B$-, $B$-$A$- or $A$-$B$-bimodules,
where $\mathcal{C}^{-1} = B$.
The theory of corings, grouplikes, Sweedler corings and ring extensions having depth $2$ are to be found in the book \cite{BW}.  
Depth and H-depth of corings will be investigated in a future paper.

\subsection{Acknowledgements}  The author thanks  P. Carvalho, C. Lomp, R.~Wisbauer
and C.~Young for discussions related to this paper.  Research in this paper was funded by the European Regional Development Fund through the programme {\small COMPETE} 
and by the Portuguese Government through the FCT  under the project \newline
 PE-C/MAT/UI0144/2011.

\end{document}